\newtheorem{theorem}{Theorem}
\newtheorem{corollary}[theorem]{Corollary}
\newtheorem{definition}[theorem]{Definition}
\newtheorem{lemma}[theorem]{Lemma}
\newtheorem{proposition}[theorem]{Proposition}
\newtheorem{remark}[theorem]{Remark}
\begin{document}

\title{Limits of Quotients of Polynomial Functions in Three Variables}
\author{Juan D. V\'{e}lez,~~ Juan P. Hern\'{a}ndez,~~ Carlos A. Cadavid.}
\date{}
\maketitle

\begin{abstract}
A method for computing limits of quotients of real analytic functions in two
variables was developed in \cite{CSV}. In this
article we generalize the results obtained in that paper to the case of
quotients $q=f(x,y,z)/g(x,y,z)$ of polynomial functions in three variables
with rational coefficients. The main idea consists in examining the behavior
of the function $q$ along certain real variety $X(q)$ (the \emph{%
discriminant variety associated to} $q)$. The original problem is then
solved by reducing to the case of functions of two variables. The inductive
step is provided by the key fact that any algebraic curve is birationally
equivalent to a plane curve. Our main result is summarized in Theorem \ref%
{tl2}.

In Section 4 we describe an effective method for computing such limits. We
provide a high level description of an algorithm that generalizes the one
developed in \cite{CSV}, now available in \textit{Maple} as the \texttt{%
limit/multi} command.
\end{abstract}

\section{Introduction}

Algorithms for computing limits of functions in one variable are studied in 
\cite{G3}. Similar algorithms have been developed in \cite{G1} and \cite{G2}%
. Computational methods dealings with classical objects, like power series
rings and algebraic curves, have been developed by several authors during
the last two decades, \cite{AMR} and \cite{SSH}. A symbolic computation
algorithm for computing local parametrization of analytic branches and real
analytic branches of a curve in $n$-dimensional space is presented in \cite%
{ANMR}.

In \cite{CSV} V\'{e}lez, Cadavid and Molina developed a method for analyzing
the existence of limits $\lim_{(x,y)\rightarrow (a,b)}q(x,y),$ where $q(x,y)$
is a quotient of two real analytic functions $f$ and $g,$ under the
hypothesis that $(a,b)$ is an isolated zero of $g$. In the case where $f$
and $g$ are polynomial functions with rational coefficients, the techniques
developed in that article provide an algorithm for the computation of such
limits, now available in \textit{Maple} as the \texttt{limit/multi} command 
\cite{link}.

An alternative method for computing limits of quotients of functions in
several variables has been recently developed in \cite{Xiao}. Their approach
is completely different from ours, relaying on Wu's algorithm as the main
tool.

In this article we generalize the methods presented in \cite{CSV} to the
case of quotients of polynomials in three variables, under the same
assumption that $g$ is a function with an isolated zero at the point $(a,b)$%
. The main idea consists in reducing the problem of determining the
existence of limits of the form 
\begin{equation}
\lim_{(x,y,z)\rightarrow (a,b,c)}f(x,y,z)/g(x,y,z)  \label{xxx}
\end{equation}%
to the problem of determining the limit along some real variety $X(q)$
associated to $q$ (the \emph{discriminant variety of }$q$). In order to
achieve this one needs to study the topology of the irreducible components
of the singular locus of $X(q)$. The original problem is then solved by
reducing to the case of functions of two variables. The inductive step is
provided by the key fact that any algebraic curve is birationally equivalent
to a plane curve. Our main result is summarized in Theorem \ref{tl2}. In
Section 4 we provide a high level description of a potential algorithm
capable of determining the existence of (\ref{xxx}), and if the limit
exists, it would be able to determine its value. \ Any of the Groebner Basis
packages available may serve as a computational engine to implement such an
algorithm. In Section 5 we present two examples that illustrate some the
computation that would be needed in a typical problem of determining and
computing a limit of this sort.

\section{Preliminaries}

\subsection{Dimension of algebraic sets and its singular locus}

\label{crzv}

In this article we consider complex affine varieties defined by polynomials
with real coefficients. If $I$ is an ideal in the polynomial ring $S=\mathbb{%
\mathbb{R}}[x_{1},\dots ,x_{n}]$, by $X=V(I)$ we will denote the \emph{%
complex} \emph{affine variety} defined by $I,$ i.e., the common zeros of $I$
in $\mathbb{C}^{n}$. The \emph{dimension }of $X$ is the Krull dimension of
the ring $\mathbb{C\otimes }_{\mathbb{R}}S/I$. Since $S/I\subset \mathbb{%
C\otimes }_{\mathbb{R}}S/I$ is a faithfully flat extension of rings, the
dimension of $X$ coincides with the dimension of $\mathbb{R}[x_{1},\dots
,x_{n}]/I$, the \emph{real affine ring of }$X.$ It is well known that if $X$
is irreducible, defined by some prime ideal $P\subset S$, then the dimension
of the domain $R=S/P$ coincides with the transcendence degree of the field
extension $\mathbb{R}\subset L$ (denoted by trdeg$_{K}L),$ where $L$ denotes
the fraction field of $R$.

We recall the definition of the \emph{singular locus} of a equidimensional
affine variety.

\begin{definition}
\label{sing}Let $Y\subset \mathbb{C}^{n}$ be an affine variety, and let $R=%
\mathbb{C}[x_{1},\ldots ,x_{n}]/I(Y)$ be its ring of coordinates. Suppose
that $R$ is equidimensional of dimension $r$ (i.e., $ht(P)=r,$ for all the
minimal primes $P$ containing $I(Y)$). Let's choose arbitrary generators $%
f_{1},\dots ,f_{k}$ for $I(Y).$ The singular locus of $Y$, denoted by 
\textrm{Sing}($Y$), is the closed subvariety of $Y$ defined by the ideal $%
J=I(Y)+$ the ideal of all $(n-r)\times (n-r)$ minors of the Jacobian matrix.
\end{definition}

\begin{remark}
\label{rsp} \ \ \ \ \ \ \ 

\begin{enumerate}
\item The above criterion to determine \textrm{Sing}($Y$) does not depend on
the generators one chooses for $I(Y)$.

\item The singular locus \textrm{Sing}($Y$) is a proper closed subvariety of 
$Y$, defined by those points $p\in Y$ for which the rank of the Jacobian
matrix $[(\partial f_{i}/\partial x_{j})(p)]$ is less that $n-r$.

\item $\dim ($\textrm{Sing}$(Y))<\dim (Y)$
\end{enumerate}
\end{remark}

(See \cite{Eisenbud}, Section 16.5 and \cite{Hartshorne}, Chapter I, Section
5).

\bigskip

We will mainly focus in the following simple case: Suppose that $X\subset 
\mathbb{C}^{3}$ is an affine variety of dimension $2$ defined by a prime
ideal $P\subset \mathbb{R}[x,y,z].$ In this case $P\subset \mathbb{R}[x,y,z]$
must be a prime ideal of height $1,$ and so it has to be principal, i.e., $%
P=(h),$ where $h\in \mathbb{R}[x,y,z]$ is some real irreducible polynomial.
Therefore, $X=V(h)$. In this case \textrm{Sing}($X$) is the complex affine
variety defined by the ideal $I_{S}=(h,\partial h/\partial x,\partial
h/\partial y,\partial h/\partial z)\subset \mathbb{R}[x,y,z]$.

\subsection{The discriminant variety\label{discrimina}}

The existence of $\lim_{(x,y,z)\rightarrow (a,b,c)}f(x,y,z)/g(x,y,z)$ does
not depend on the particular choice of local coordinates. Hence, after an
appropriate translation we may always assume that $p=(a,b,c)$ is the origin,
here denoted by $O$. Our objective is to compute 
\begin{equation}
\lim_{(x,y,z)\rightarrow (0,0,0)}f(x,y,z)/g(x,y,z),  \label{eq1}
\end{equation}%
where $f(x,y,z)$ and $g(x,y,z)$ are rational polynomial functions, and where 
$g$ has an isolated zero at $O$. If $q(x,y,z)=f(x,y,z)/g(x,y,z),$~we define
the\emph{\ discriminant variety} $X(q)$ associated to $q$ as the variety
defined by the $2\times 2$ minors of the matrix 
\begin{equation}
A=\left[ 
\begin{array}{ccc}
x & y & z \\ 
\partial q/\partial x & \partial q/\partial y & \partial q/\partial z%
\end{array}%
\right] .  \label{matriz}
\end{equation}%
Strictly speaking, the $2\times 2$ minors of $A$, $x_{i}\partial q/\partial
x_{j}-x_{j}\partial q/\partial x_{i}$, are not necessarily polynomial
functions. However, these minors can be written as 
\begin{equation*}
x_{i}\partial q/\partial x_{j}-x_{j}\partial q/\partial x_{i}=\frac{%
x_{i}(g\partial f/\partial x_{j}-f\partial g/\partial x_{j})}{g^{2}}-\frac{%
x_{j}(g\partial f/\partial x_{i}-f\partial g/\partial x_{i})}{g^{2}},
\end{equation*}%
and therefore, if we let%
\begin{equation*}
f_{x_{i},x_{j}}=x_{i}(g\partial f/\partial x_{j}-f\partial g/\partial
x_{j})-x_{j}(g\partial f/\partial x_{i}-f\partial g/\partial x_{i}),
\end{equation*}%
then the variety $X(q)$ can be defined as the zeros of the ideal $%
J=(f_{x,y},f_{x,z},f_{y,z}).$

The following proposition states that in order to determine the existence of
the limit (\ref{eq1}) it suffices to analyze the behavior of the function $%
q(x,y,z)$ along the discriminant variety $X(q)$.

\begin{proposition}
\label{pl1}The limit $\lim_{(x,y,z)\rightarrow 0}q(x,y,z)$ exists, and
equals $L\in \mathbb{R}$, if and only if for every $\epsilon >0$ there is $%
\delta >0$ such that for every $(x,y,z)\in X(q)$ with $0<|(x,y,z)|<\delta $
the inequality $|q(x,y,z)-L|<\epsilon $ holds.
\end{proposition}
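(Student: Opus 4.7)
The $(\Rightarrow)$ direction is immediate, since $X(q)\setminus\{O\}$ is a subset of every punctured neighbourhood of $O$; all the content lies in $(\Leftarrow)$.

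The geometric principle driving $X(q)$ is that its defining condition---the vanishing of the $2\times 2$ minors of $A$---expresses exactly the parallelism of the gradient $\nabla q=(q_x,q_y,q_z)$ with the radial vector $(x,y,z)$. By the Lagrange multiplier characterization this is precisely the critical point condition for $q$ restricted to the sphere $S_{r}=\{\,v\in\mathbb R^{3}:|v|=r\,\}$ at a point $p$ with $|p|=r$. Consequently $X(q)\cap S_r$ contains every extremum of $q|_{S_r}$, and this is the lever that converts information along $X(q)$ into information on whole spheres.

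Given this, I would argue as follows. Since $g$ has an isolated zero at $O$, first fix $\rho>0$ with $g(v)\neq 0$ for $0<|v|\le\rho$, so that $q$ is smooth on the punctured closed ball. Given $\epsilon>0$, the hypothesis furnishes $\delta\in(0,\rho)$ such that $|q(v)-L|<\epsilon$ for every $v\in X(q)$ satisfying $0<|v|<\delta$. For each $r\in(0,\delta)$ the sphere $S_r$ is compact and $q|_{S_r}$ is continuous, hence attains its maximum $M(r)$ and minimum $m(r)$ at points $p_M,p_m\in S_r$. At these points $\nabla q$ is proportional to the position vector, so the $2\times 2$ minors of $A$ vanish. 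Since on the punctured ball those minors coincide, up to the nonzero factor $g^{2}$, with the polynomial generators $f_{x,y},f_{x,z},f_{y,z}$ of $J$, we conclude $p_M,p_m\in X(q)$, and therefore $M(r),m(r)\in(L-\epsilon,L+\epsilon)$. Hence $|q(v)-L|<\epsilon$ for every $v\in S_r$, and letting $r$ range over $(0,\delta)$ gives $|q(v)-L|<\epsilon$ throughout $B_\delta\setminus\{O\}$, which is the unrestricted limit.

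The only subtlety worth flagging is the bookkeeping between the condition $\nabla q\parallel(x,y,z)$ at a real smooth point of $q$ and membership in $X(q)$, which was defined as the real locus of the polynomial ideal $J$; the identity $x_i q_{x_j}-x_j q_{x_i}=f_{x_i,x_j}/g^{2}$ displayed just before the proposition resolves this immediately once we know $g\neq 0$ on the punctured ball. Beyond that, I do not foresee a genuine obstacle.
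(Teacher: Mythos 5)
Your proposal is correct and follows essentially the same route as the paper: Lagrange multipliers show the extrema of $q$ on each small sphere lie in $X(q)$, and a sandwich between the spherical maximum and minimum transfers the $\epsilon$-bound from $X(q)$ to the whole punctured ball. You are in fact slightly more careful than the paper in fixing a punctured ball where $g\neq 0$ and in passing from the vanishing of the rational minors to the polynomial generators $f_{x,y},f_{x,z},f_{y,z}$, but the argument is the same.
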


\begin{proof}
The method of Lagrange multipliers applied to the function $q(x,y,z)$ with
the constraint $x^{2}+y^{2}+z^{2}=r^{2}$, $r>0$ guarantees that if $%
C_{r}(0)=\{(x,y,z)\in \mathbb{R}^{3}~:~x^{2}+y^{2}+z^{2}=r^{2}\}$ then the
extreme values of $q(x,y,z)$ on $C_{r}(0)$ are taken at those points $%
p=(a,b,c)\in C_{r}(0)$ for which $(\partial q/\partial x(p),\partial
q/\partial y(p),\partial q/\partial z(p))=\lambda (a,b,c)$, i.e., at those
points in $X(q)$.\newline
Suppose that given $\epsilon >0$ there is $\delta >0$ such that for every $%
(x,y,z)\in X(q)\cap D_{\delta }^{\ast }$ the inequality $|q(x,y,z)-L|<%
\epsilon $ holds, where~ $D_{\delta }^{\ast }=\{(x,y,z)\in \mathbb{R}%
^{3}~:~0<\sqrt{x^{2}+y^{2}+z^{2}}<\delta \}$. Let $(x,y,z)\in D_{\delta
}^{\ast }$ and $r=\sqrt{x^{2}+y^{2}+z^{2}}$. If $t(r),s(r)\in C_{r}(0)$ are
respectively the maximum and minimum values of $q(x,y,z)$, subject to $%
C_{r}(0)$, then 
\begin{equation*}
q(s(r))-L\leq q(x,y,z)-L\leq q(t(r))-L.
\end{equation*}%
As $t(r)$ and $s(r)\in X(q)\cap C_{r}(0)\subset X(q)\cap D_{\delta }^{\ast }$%
, one sees that $-\epsilon <q(s(r))-L$, and henceforth $q(t(r))-L<\epsilon $%
. Thus, $|q(x,y,z)-L|<\epsilon $.\newline
The reciprocal is obvious.
\end{proof}

\subsection{Birational equivalence of curves}

We intend to reduce the problem of determining the existence of the limit (%
\ref{xxx}) to a problem in fewer variables. In order to achieve this we will
use the fact that any algebraic curve is birationally equivalent to a plane
curve. This result follows from the following standard result:

\begin{proposition}[existence of primitive elements]
\label{t1}Let $K$ be a field of characteristic zero and let $L$ be a finite
algebraic extension of $K$. Then there is $z\in L$ such that $L=K(z)$ (\cite%
{Fulton}, Page 75).
\end{proposition}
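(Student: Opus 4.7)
The plan is to prove the classical primitive element theorem by induction on the number of generators, reducing to the two-generator case and then producing a primitive element as a generic linear combination. Since $L/K$ is finite, it is in particular finitely generated, so $L = K(\alpha_1,\dots,\alpha_n)$ for algebraic elements $\alpha_i$. If I can handle the case $n=2$, i.e. show that $K(\alpha,\beta) = K(\gamma)$ for some $\gamma$, then an easy induction on $n$ finishes the proof: apply the two-generator statement first to $K(\alpha_1,\alpha_2)$, then absorb $\alpha_3$, and so on.

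For the two-generator case I would try $\gamma = \alpha + c\beta$ for a cleverly chosen $c \in K$. Let $f,g \in K[X]$ be the minimal polynomials of $\alpha,\beta$, and work inside a fixed algebraic closure $\overline{K}$. Write the roots of $f$ as $\alpha = \alpha_1,\dots,\alpha_m$ and of $g$ as $\beta = \beta_1,\dots,\beta_n$; here characteristic zero is crucial, because it guarantees that $g$ is separable, so the $\beta_j$ are pairwise distinct. I would choose $c \in K$ so as to avoid the finitely many bad values that satisfy $\alpha_i + c\beta_j = \alpha + c\beta$ with $(i,j)\neq(1,1)$; each such equation is a single linear condition on $c$ with $\beta_j\neq\beta$, and since $K$ has characteristic zero it is infinite, so a good $c$ exists.

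Now set $\gamma = \alpha + c\beta$ and consider the polynomial $h(X) = f(\gamma - cX) \in K(\gamma)[X]$. By construction $h(\beta) = f(\alpha) = 0$, and also $g(\beta)=0$, so $(X-\beta)$ divides $\gcd(h,g)$ in $\overline{K}[X]$. The choice of $c$ ensures that for $j\neq 1$ one has $h(\beta_j) = f(\alpha + c(\beta - \beta_j)) \neq 0$, so no other $\beta_j$ is a common root. Since $g$ is separable, $\gcd(h,g) = X - \beta$ in $\overline{K}[X]$. But the gcd of two polynomials in $K(\gamma)[X]$ is computable by the Euclidean algorithm and lies in $K(\gamma)[X]$; hence $\beta \in K(\gamma)$, and then $\alpha = \gamma - c\beta \in K(\gamma)$ as well, giving $K(\alpha,\beta) \subseteq K(\gamma)$, and the reverse inclusion is trivial.

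The only genuine subtlety is the counting step that produces a good $c$: one has to verify that the constraint $\alpha_i + c\beta_j = \alpha + c\beta$ really does reduce to a single nontrivial linear equation in $c$ for each pair $(i,j)\neq (1,1)$ (which requires $\beta_j \neq \beta$ when $j\neq 1$, and hence separability of $g$). Once that is in hand, the descent of the gcd from $\overline{K}[X]$ to $K(\gamma)[X]$ is standard, and the induction on the number of generators is routine. The characteristic-zero hypothesis enters in exactly two places, both essential: infinitude of $K$ (to pick $c$) and separability of $g$ (to guarantee distinct $\beta_j$ and that $\gcd = X-\beta$).
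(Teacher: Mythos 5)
Your proof is correct: the choice of $c$ avoiding the finitely many values $(\alpha-\alpha_i)/(\beta_j-\beta)$ (possible since $K$ is infinite in characteristic zero), the observation that $\gcd(h,g)=X-\beta$ in $\overline{K}[X]$ because $g$ is separable and no $\beta_j$ with $j\neq 1$ is a common root, and the descent of the gcd to $K(\gamma)[X]$ via the Euclidean algorithm together give $\beta\in K(\gamma)$, and the induction on the number of generators is sound. The paper gives no proof of its own for this proposition, citing Fulton (p.~75) instead, and your argument is precisely the standard primitive-element proof found there (generic linear combination $\gamma=\alpha+c\beta$ plus the gcd argument), so it matches the intended approach.
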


This immediately implies the following corollary:

\begin{corollary}
\label{c1} Let $X$ be an irreducible algebraic curve over a field $k$ of
characteristic zero, and let $K$ be the quotient field of the ring of
coordinates of $X$. Then for any $x\in K-k$ which is not algebraic over $k$, 
$K$ is algebraic over $k(x)$, and there is an element $y\in K$ such that $%
K=k(x,y) $.
\end{corollary}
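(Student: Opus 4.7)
The plan is to invoke Proposition \ref{t1} with base field $k(x)$ and extension $K$, so the real content lies in verifying that $K/k(x)$ is a finite algebraic extension. I would proceed as follows.

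First, I would translate the geometric hypothesis into a statement about transcendence degree. Since $X$ is an irreducible algebraic curve over $k$, its coordinate ring is a finitely generated $k$-algebra that is a domain of Krull dimension $1$. By the standard identification between Krull dimension and transcendence degree of the fraction field over $k$ (mentioned in Section~\ref{crzv}), this gives $\mathrm{trdeg}_k K = 1$, and $K$ is finitely generated as a field extension of $k$.

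Next, I would use the chosen element $x$ to build a transcendence basis. Since $x\in K$ is not algebraic over $k$, the singleton $\{x\}$ is algebraically independent over $k$. As $\mathrm{trdeg}_k K = 1$, it follows that $\{x\}$ is in fact a transcendence basis of $K/k$. Consequently every element of $K$ is algebraic over $k(x)$, so $K/k(x)$ is an algebraic extension. Moreover, $K$ being finitely generated as a field over $k$ forces $K$ to be finitely generated over $k(x)$ as well, and a finitely generated algebraic extension is finite. Hence $[K:k(x)]<\infty$.

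Finally, since $k$ has characteristic zero, so does $k(x)$, and I would apply the primitive element theorem (Proposition~\ref{t1}) to the finite extension $k(x)\subset K$. This yields some $y\in K$ with $K=k(x)(y)=k(x,y)$, completing the argument. The only point requiring a moment of care is the step guaranteeing finiteness of $K/k(x)$; the rest is a direct application of standard field-theoretic facts and the assumption that $X$ is a curve.
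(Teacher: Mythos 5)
Your proposal is correct and follows the same route the paper intends: the paper derives the corollary "immediately" from Proposition \ref{t1} via exactly the facts you spell out, namely that $\mathrm{trdeg}_k K=1$ makes $\{x\}$ a transcendence basis, so $K/k(x)$ is a finitely generated algebraic (hence finite) extension in characteristic zero, to which the primitive element theorem applies. Your write-up simply makes explicit the finiteness step that the paper leaves implicit.
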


The next theorem is a well known fact. Notwithstanding, we give a proof
since we will need the explicit construction of the isomorphism denoted by $%
\mu$ in the following theorem.

\begin{theorem}
\label{t2}\label{rnew copy(1)} Let $X$ be an irreducible space curve $X$ in $%
\mathbb{C}^{3}$ defined by polynomials with real coefficients, and such that
the origin of $\mathbb{C}^{3}$ is a point of $X$. Then there exists an
irreducible affine plane curve $Y\subset \mathbb{C}^{2}$ and a field
isomorphism $\varphi :K(Y)\rightarrow K(X)$ so that $X$ is birationally
equivalent to $Y$. After removing a finite set of points $Z\subset Y$, if $%
Y_{0}=Y\setminus Z$, then there is a morphism $\mu :X\rightarrow Y_{0}$ such
that $\mu $ restricted to $X_{0}=\mu ^{-1}(Y_{0})$ is an isomorphism onto $%
Y_{0}.$ Both $\mu $ and its inverse can be explicitly constructed.
\end{theorem}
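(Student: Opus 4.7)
The plan is to apply Corollary~\ref{c1} to the function field $K(X)$, which has transcendence degree one over $\mathbb{R}$, and use it to produce an explicit birational projection of $X$ onto an irreducible plane curve. The point of making every step constructive is that both the map $\mu$ and its inverse should ultimately be written as polynomial/rational formulas in the ambient coordinates, which is what the algorithm of Section~4 will require.

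First, after relabeling the coordinates of $\mathbb{C}^3$ if necessary, I would assume that the coordinate function $x$ restricted to $X$ is transcendental over $\mathbb{R}$; since $\dim X=1$ at least one of $x,y,z$ has this property. Corollary~\ref{c1} then yields a primitive element $w\in K(X)$ with $K(X)=\mathbb{R}(x,w)$, and the standard proof of Proposition~\ref{t1} actually produces $w$ in the form $w=y+\lambda z$ for all but finitely many $\lambda\in\mathbb{R}$ (a rational $\lambda$ can be found by trial). Let $F(u,v)\in\mathbb{R}[u,v]$ be the irreducible polynomial obtained by clearing denominators in the minimal polynomial of $w$ over $\mathbb{R}(x)$, and set $Y:=V(F)\subset\mathbb{C}^2$. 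Then $Y$ is an irreducible plane curve defined over $\mathbb{R}$, and the assignment $u\mapsto x$, $v\mapsto w$ extends to the required isomorphism $\varphi:K(Y)\to K(X)$.

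The projection is given by the polynomial formulas
\[
\mu:X\to Y,\qquad \mu(a,b,c)=(a,\,b+\lambda c),
\]
whose image lies in $Y$ because $F(x,y+\lambda z)$ vanishes identically on $X$. For the inverse I would use that $y,z\in K(X)=\mathbb{R}(x,w)$: since $\{1,w,\dots,w^{n-1}\}$ with $n=\deg_{v}F$ is a basis of $K(X)$ over $\mathbb{R}(x)$, solving one linear system over $\mathbb{R}(x)$ yields explicit rational expressions
\[
y=P_1(x,w)/Q(x,w),\qquad z=P_2(x,w)/Q(x,w),
\]
with a common denominator $Q\in\mathbb{R}[u,v]$ coprime to $F$. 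Let $Z\subset Y$ be the union of $\mathrm{Sing}(Y)$ with the finite set $V(F,Q)$, put $Y_0:=Y\setminus Z$ and $X_0:=\mu^{-1}(Y_0)$. On $Y_0$ the assignment
\[
\nu(x,w)=\bigl(x,\,P_1(x,w)/Q(x,w),\,P_2(x,w)/Q(x,w)\bigr)
\]
defines a morphism $\nu:Y_0\to X_0$, and the relations $F(x,w)=0$ and $w=y+\lambda z$ show at once that $\nu$ and $\mu|_{X_0}$ are mutually inverse.

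The main obstacle is not existence, which is immediate from Corollary~\ref{c1}, but \emph{effectivity}: we must exhibit $w$, $F$, $P_1$, $P_2$, $Q$, and the finite set $Z$ starting from the defining ideal of $X$. All of these reduce to Gr\"obner-basis elimination in $\mathbb{R}[x,y,z]$ together with ordinary polynomial arithmetic in $\mathbb{R}(x)[w]$, so the construction is algorithmic. A minor subtlety is that $\lambda$ must be chosen so that $w=y+\lambda z$ is actually primitive; this is finite trial-and-error, and correctness of a candidate $\lambda$ is certified by checking that the ideal $(F(x,y+\lambda z))$ together with two auxiliary linear relations cuts out the same variety as the ideal defining $X$.
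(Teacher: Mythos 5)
Your proposal is correct and follows essentially the same route as the paper: choose a coordinate $x$ transcendental on $X$, take the primitive element $u=y+\lambda z$, let $Y$ be the plane curve cut out by its minimal polynomial over $\mathbb{R}(x)$ (equivalently by the generator of $\ker\bigl(\mathbb{R}[S,T]\to\mathbb{R}[x,u]\bigr)$, which is what the paper uses), and invert the projection $(a,b,c)\mapsto(a,b+\lambda c)$ by writing $y,z$ as explicit rational functions of $(x,u)$, removing the finitely many points where the denominators vanish. The only cosmetic differences are that the denominators can in fact be taken to depend on $x$ alone (as in the paper, which slightly simplifies the finiteness of $Z$) and that adjoining $\mathrm{Sing}(Y)$ to $Z$ is unnecessary, though harmless.
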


\begin{proof}
Suppose that $X=V(P),$ where $P\subset \mathbb{R}[X,Y,Z]$ is a prime ideal.
Since $X$ is an irreducible algebraic curve $\text{dim}(X)=\text{dim}(%
\mathbb{R}[X,Y,Z]/P)=1$. Denote by $\mathbb{R}(x,y,z)$ the fraction field of 
$\mathbb{R}[X,Y,Z]/P$. Recall that $\text{dim}(X)=\text{trdeg}_{\mathbb{R}}%
\mathbb{R}(x,y,z)$.\newline
For dimensional reasons some of the variables $x,y$ or $z$ has
to be transcendental over $\mathbb{R}$. Suppose without loss of generality
that $x$ is transcendental over $\mathbb{R}$. Corollary \ref{c1} implies that
$\mathbb{R}(x)\subset \mathbb{R}(x,y,z)$ is an algebraic extension. By
Proposition \ref{t1}, one can always find $u=y+\lambda z,$ for some $\lambda
\in \mathbb{R}(x),$ such that $\mathbb{R}(x,y,z)=\mathbb{R}(x,u)$. Moreover,
since this is true for almost all $\lambda $, this element can be taken to
be any real constant, except for finitely many choices. Define $\varphi :~%
\mathbb{R}[S,T]\rightarrow \mathbb{R}[x,u]\subset \mathbb{R}(x,y,z)$ as the $%
\mathbb{R}$-algebra homomorphism that sends $S\rightarrow x$ and $%
T\rightarrow u$. Clearly $\varphi $ is surjective, and therefore, if $J=\text{%
ker}(\varphi ),$ there is an isomorphism of $\mathbb{R}$-algebras $\varphi :%
\mathbb{R}[S,T]/J\overset{\sim }{\rightarrow }\mathbb{R}[x,u].$
Consequently, $J\in \mathbb{R}[S,T]$ is a prime ideal. Denote $V(J)$ by $Y$.
The last isomorphism induces a field isomorphism $\varphi :\mathbb{R}%
(Y)\cong \mathbb{R}(x,u)\rightarrow \mathbb{R}(x,y,z)$ defined as $\varphi
(x)=x,$ $\varphi (u)=y+\lambda z$. Therefore, $\text{dim}(Y)=\text{dim}%
(X)=1$. Hence, $Y=V(J)$ is an irreducible algebraic plane curve which is
birationally equivalent to $X$.\newline
The morphism $\varphi :\mathbb{R}(Y)\rightarrow \mathbb{R}(X)$ induces a
morphism of varieties $\mu :X\rightarrow Y$ given by $\mu
(a,b,c)=(a,b+\lambda c)$.

Notice that since $(0,0,0)\in X,$ then, obviously, $(0,0)\in Y$. Since $Y$ is
an irreducible plane curve, $J$ must be a
height one prime ideal. Thus, $J=(h),$ for some $h(X,U)\in \mathbb{R}[X,U]$.%
\newline
We can assume that the polynomial $h(a,U)$ obtained by replacing the
variable $X$ by $a\in \mathbb{C}$ is not identically zero: If $h(a,U)=0$ we
would have $h(X,U)=(X-a)^{m}t(X,U),$ with $t(a,U)\neq 0$. But $(0,0)\in Y$
implies $a=0$, and henceforth $h(X,U)=X^{m}t(X,U)$. Thus, $h(X,U)=X$ or $%
h(X,U)=t(X,U),$ since $Y$ is irreducible. Finally, we note that $h(X,U)=X$
contradicts the fact that $x$ is transcendental over $\mathbb{R}$.

On the other hand, since $x$ is transcendental over $\mathbb{R}$, by
Corollary \ref{c1}, the extension $\mathbb{R}(x)\subset \mathbb{R}(x)(u)$ is
algebraic. Therefore, since $y\in \mathbb{R}(x,u)$ one can write $y$ as: 
\begin{equation*}
y=\frac{a_{0}(x)}{b_{0}(x)}+\frac{a_{1}(x)}{b_{1}(x)}u+\cdots +\frac{a_{r}(x)%
}{b_{r}(x)}u^{r},
\end{equation*}%
where $r$ is smaller than the degree of the field extension $[\mathbb{R}%
(x)(u):\mathbb{R}(x)]$. Taking $b(x)=b_{0}(x)\cdots b_{r}(x)$ we can rewrite
the last equation as 
\begin{equation}
y=\frac{c_{0}(x)+c_{1}(x)u+\cdots +c_{r}(x)u^{r}}{b(x)},  \label{zzz}
\end{equation}%
for certain $c_{i}(x)$. Therefore, we have $y=f_{1}(x,u)/g_{1}(x)$ and $z=f_{2}(x,u)/g_{2}(x)$. Consider $Z=\{(a,b)\in
Y~:~g_{1}(a)=0~~\text{or}~~g_{2}(a)=0\}$, which is a Zariski closed subset of 
$Y$.\newline
Let us see that $Z$ is a finite set. Indeed, the polynomials $g_{1}$ and $%
g_{2}$ have finitely many roots. Therefore, if $a_{1},\dots ,a_{k}\in 
\mathbb{C}$ are these roots, for each $a_{i}$, $(a_{i},b)\in Y$ if and only
if $h(a_{i},b)=0$, where $Y=V(J)$ with $J=(h)$. Notice that the polynomial $%
t(U)=h(a_{i},U)\in \mathbb{C}[U]$ has finitely many roots. Hence, there
are only finitely many elements $(a_{i},b)$ with $g_{1}(a_{i})=0$ or $%
g_{2}(a_{i})=0$, and such that $h(a_{i},b)=0$. Thus, we conclude that $Z$ is
finite.\newline
Consider the open subset $Y_{0}=Y\setminus Z$ of $Y$. Let $X_{0}=\mu
^{-1}(Y_{0})$.
Define $\tau :Y_{0}\rightarrow X_{0}$ as $\tau (d,e)=(d,\frac{f_{1}(d,e)}{%
g_{1}(d)},\frac{f_{2}(d,e)}{g_{2}(d)})$.
This last morphism induces an $\mathbb{R}$-algebra homomorphism $\psi :%
\mathbb{R}(X)\rightarrow O_{Y}(Y_{0})$ given by $\psi (x)=s,~\psi
(y)=f_{1}(s,t)/g_{1}(s)$, and $\psi (z)=f_{2}(s,t)/g_{2}(s)$. Clearly, 
\begin{equation}
\varphi \circ \psi (x)=x,~~\varphi \circ \psi (y)=\frac{f_{1}(x,u)}{g_{1}(x)}%
=y,~~\varphi \circ \psi (z)=\frac{f_{2}(x,u)}{g_{2}(x)}=z.  \label{eq2}
\end{equation}%
Therefore, $\varphi \circ \psi =Id_{\mathbb{R}(X)},$ and consequently $%
\varphi \circ \psi |_{X_{0}}:O_{X}|_{X_{0}}\rightarrow O_{Y}|_{Y_{0}}$ is
the identity. On the other hand, $\psi \circ \varphi (s)=\psi (x)=s$ and $%
\psi \circ \varphi (t)=\psi (u)$. By (\ref{eq2}) we have $\varphi \circ \psi
(u)=u$ and $\varphi (t)=u$, which implies that $t=\psi (u),$ since $\varphi $
is injective. Hence, $\psi \circ \varphi (t)=t$ and therefore  $\psi
\circ \varphi |_{Y_{0}}:O_{Y}|_{Y_{0}}\rightarrow O_{X}|_{X_{0}}$ is the
identity. Hence, $\psi :O_{X}|_{X_{0}}\rightarrow O_{Y}|_{Y_{0}}$ is the
inverse of the morphism $\varphi :O_{Y}|_{Y_{0}}\rightarrow O_{X}|_{X_{0}}.$ Thus, the homomorphism $\tau :Y_{0}\rightarrow X_{0}$ induced by $\psi $
is the inverse of $\mu :X_{0}\rightarrow Y_{0}$.\newline
Finally, it is clear that the morphism $\mu :X_{0}\rightarrow Y_{0}$ sends
the real part of $X_{0}$ into the real part of $Y_{0}$, and since $\mu
^{-1}=\tau :Y_{0}\rightarrow X_{0}$ is determined by the polynomials $%
f_{1},f_{2},g_{1}$ and $g_{2}$, which are all real polynomials, then $\mu
^{-1}=\tau $ also sends the real part of $Y_{0}$ into the real part of $X_{0}
$.
\end{proof}

\begin{remark}
\label{rnew2} $X_{0}$ is obtained from $X$ by removing finitely many points. 
\begin{proof}

In fact, a point $(a,b,c)\in X$ does not belong to $X_{0}$ iff $\mu
(a,b,c)=(a,b+\lambda c)\notin Y_{0}$, i.e., iff $(a,b+\lambda c)\in Z$. But $%
Z$ is finite, and therefore there are only finitely many choices for $a$ and 
$(b+\lambda c)$ such that $(a,b+\lambda c)\notin Y_{0}.$ Fix any values for $a$
and for $\eta $=$b+\lambda c$. If $f_{1}(x,y,z),\dots
,f_{k}(x,y,z)$ are generators for $P$ then, clearly, $f_{i}(a,\eta -\lambda
c,c)=0$. But each polynomial $g_{i}(z)=f_{i}(a,\eta -\lambda z,z)$ can only
have finitely many roots. This proves the claim.
\end{proof}
\end{remark}

This Remark tells us that the problem of determining (and computing) the
limit of a function along the varieties $X$ and $Y$ is equivalent to the
same problem when one approaches the origin along $X_{0}$ and $Y_{0}$.

\subsection{Groebner bases\label{Groebner}}

In this section we collect some basic properties and results on Groebner
bases and Elimination Theory that will be needed later for the development
of an algorithm that computes (\ref{xxx}). The main reference for this
section is \cite{Eisenbud}, Chapter 15.

By $S=K[x_{1},\dots ,x_{n}]$ we denote the polynomial ring in $n$-variables
with coefficients in a field $K$. We denote the set of monomials of $S$ by $%
M $. By a \emph{term} in $S$ is meant a polynomial of the form $cm$, where $%
c\neq 0\in K$ and $m\in M.$

\begin{definition}
A monomial order in $S$ is a total order on $M$ satisfying $%
nm_{1}>nm_{2}>m_{2},$ for every monomial $n\neq 1,$ and for any pair of
monomials $m_{1}$ and $m_{2}$ satisfying $m_{1}>m_{2}$.
\end{definition}

Every monomial order is Artinian which means that every subset of $M$ has a
least element.

For a fixed monomial order $>$ in $S$, the \emph{initial term} of $p\in S$
is the term of $p$ whose monomial is the greatest with respect to $>$. It is
usually denoted by $\text{in}(p)$. Given an ideal $I\subset S,$ its ideal of
initial terms, $\text{in}(I),$ is defined as the ideal generated by the set $%
\{\text{in}(p):p\in I\}$.

\begin{definition}
Let $I\subset S$ be any ideal, and fix a monomial order in $S$. We say that
a set of elements ~$\{f_{1},\dots ,f_{k}\}$ of $I$ is a \textit{Groebner
basis} for $I$ iff $\text{in}(I)=(\text{in}(f_{1}),\dots ,\text{in}(f_{k}))$.
\end{definition}

We list some basic facts about Groebner bases.

\begin{remark}
\label{rl1}

\begin{itemize}
\item[1.] The set of monomials not in the ideal $\text{in}(I)$ forms a basis
for the $K$-vector space $S/I$.

\item[2.] There always exists a Groebner basis for an ideal $I\subset S$.
As~ $S$ is a Noetherian ring, the ideal $I$ is finitely generated, let's
say, $I=(f_{1},\dots ,f_{k})$. Consider the ideal $J=(\text{in}(f_{1}),\dots
,\text{in}(f_{k}))$. If $J=\text{in}(I)$ then $\{f_{1},\dots ,f_{k}\}$ is a
Groebner basis for $I$.

\item[3.] If $\{f_{1},\dots ,f_{k}\}$ is a Groebner basis for $I$ then $%
I=(f_{1},\dots ,f_{k})$.

\item[4.] There is a criterion that allows to compute algorithmically a
Groebner basis for an ideal $I\subset S$. This criterion is known as
Buchberger's algorithm (\cite{Eisenbud}, Page 332).

\item[5.] Let $I,J$ be ideals of $S$ such that $I\subset J$. If $\text{in}%
(I)=\text{in}(J)$ then $I=J$.
\end{itemize}
\end{remark}

An example of a monomial order is the\emph{\ lexicographic order}, defined
in the following way: Fix any total order for the variables, for instance $%
x_{1}>x_{2}>\cdots >x_{n}$, and define $x_{1}^{a_{1}}x_{2}^{a_{2}}\cdots
x_{n}^{a_{n}}>x_{1}^{b_{1}}x_{2}^{b_{2}}\cdots x_{n}^{b_{n}}$ if for the
first $j$ with $a_{j}\neq b_{j}$ one has $a_{j}>b_{j}$. (The lexicographic
order will be the monomial order that we will use in this article.)

Now we discuss a basic result that will be needed in Sections 4 and 5.

Let $I$ be an ideal of the polynomial ring $K[x_{1},\dots x_{n},y_{1},\dots
,y_{s}]$. Given a Groebner basis for $I$ we want to compute a Groebner basis
for $I\cap K[x_{1},\dots ,x_{n}]$. For this, we have to introduce the notion
of an \textit{elimination order:}

\begin{definition}
A monomial order in $K[x_{1}, \dots x_{n}, y_{1}, \dots, y_{s}]$ is called
an elimination order if the following condition holds: $f \in K[x_{1}, \dots
x_{n}, y_{1}, \dots, y_{s}]$ with $\text{in}(f) \in K[x_{1}, \dots, x_{n}]$
implies $f \in K[x_{1}, \dots, x_{n}]$.
\end{definition}

\begin{lemma}
\label{ll1} Let $I\subset K[x_{1},\dots x_{n},y_{1},\dots ,y_{s}]$ be an
ideal, and let $\mathcal{B}=\{f_{1},\dots ,f_{k}\}$ be a Groebner basis for $%
I$ with respect to an elimination order. Assume that $f_{1},\dots ,f_{t}$
with $t\leq k$ are all elements of $\mathcal{B}$ such that $f_{1},\dots
,f_{t}\in K[x_{1},\dots ,x_{n}]$. Then $\{f_{1},\dots ,f_{t}\}$ is a
Groebner basis for $I\cap K[x_{1},\dots ,x_{n}]$.
\end{lemma}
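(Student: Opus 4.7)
The plan is to verify the defining property of a Groebner basis directly for $I'=I\cap K[x_1,\dots,x_n]$: namely, that $\text{in}(I')=(\text{in}(f_1),\dots,\text{in}(f_t))$. One inclusion is immediate: each $f_i$ with $i\leq t$ lies in $I$ and, by hypothesis, in $K[x_1,\dots,x_n]$, so $f_i \in I'$ and hence $\text{in}(f_i)\in \text{in}(I')$. This gives $(\text{in}(f_1),\dots,\text{in}(f_t))\subset \text{in}(I')$.

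For the reverse inclusion, I would take any nonzero $f\in I'$ and show that $\text{in}(f)$ lies in $(\text{in}(f_1),\dots,\text{in}(f_t))$. Since $f\in I$ and $\mathcal{B}=\{f_1,\dots,f_k\}$ is a Groebner basis of $I$, we have $\text{in}(f)\in \text{in}(I)=(\text{in}(f_1),\dots,\text{in}(f_k))$; as both sides are monomial ideals, $\text{in}(f)$ must be divisible by some $\text{in}(f_i)$ with $1\leq i\leq k$. Now comes the key step, and the only real content of the argument: because $f\in K[x_1,\dots,x_n]$, the monomial $\text{in}(f)$ involves only the variables $x_1,\dots,x_n$, and therefore any monomial dividing it also involves only those variables. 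In particular $\text{in}(f_i)\in K[x_1,\dots,x_n]$. The defining property of the elimination order then forces $f_i\in K[x_1,\dots,x_n]$, so $i\leq t$ by the way we labelled the basis elements. Hence $\text{in}(f)$ belongs to $(\text{in}(f_1),\dots,\text{in}(f_t))$, completing the proof.

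The only subtle point is the correct application of the elimination order property: it is crucial that the order is designed so that the condition $\text{in}(f_i)\in K[x_1,\dots,x_n]$ upgrades from a statement about leading terms to a statement about the whole polynomial $f_i$. Once this is in place, the rest is a bookkeeping verification, and the conclusion $\{f_1,\dots,f_t\}$ is a Groebner basis for $I'$ follows from items 2 and 3 of Remark \ref{rl1}.
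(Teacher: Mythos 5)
Your proof is correct: the paper itself does not spell out an argument but simply cites Eisenbud (p.~380), and your argument is exactly the standard one found there — the only nontrivial step being that divisibility of $\mathrm{in}(f)$ by some $\mathrm{in}(f_{i})$ forces $\mathrm{in}(f_{i})\in K[x_{1},\dots ,x_{n}]$, whence the elimination-order property upgrades this to $f_{i}\in K[x_{1},\dots ,x_{n}]$ and so $i\leq t$. Nothing is missing, so your proposal can stand as a complete proof of the lemma.
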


\begin{proof}

(See \cite{Eisenbud}, Page 380).
\end{proof}

\begin{remark}
\label{rl2} Suppose that $\varphi :K[x_{1},\dots ,x_{n}]\rightarrow
K[y_{1},\dots ,y_{s}]/J$ is a ring homomorphism defined as $\varphi
(x_{i})=f_{i} $. Consider $F_{i}\in K[y_{1},\dots ,y_{s}]$ such that $%
\overline{F_{i}}=f_{i}$ in $K[y_{1},\dots ,y_{s}]/J,$ and define the ideal $%
I=JT+(F_{1}-x_{1},\dots ,F_{n}-x_{n})\subset T$, where $T=K[x_{1},\dots
,x_{n},y_{1},\dots ,y_{s}]$. Then $\ker {\varphi }=I\cap K[x_{1},\dots
,x_{n}]$. Therefore the above lemma implies that $\ker {\varphi }$ can be
computed algorithmically.
\end{remark}

\begin{proof}
	
(See \cite{Eisenbud}, Page 358).
\end{proof}

\section{Reduction to the case of functions of two variables}

Let $q(x,y,z)=f(x,y,z)/g(x,y,z)$ be the quotient of two polynomials. We
recall (Section \ref{discrimina}) that the \emph{discriminant variety
associated to }$q,$ $X(q)\subset \mathbb{C}^{3}$, is the affine variety
defined by the $2\times 2$ minors of the matrix we denoted by $A$. As a
variety, $X(q)$ may be decomposed into its irreducible components in $%
\mathbb{C}^{3},$ let's say $X(q)=X_{1}\cup X_{2}\cup \cdots \cup X_{n}$.

We are only interested in those components that contain the origin. These
will be called the \emph{relevant components}. Suppose these are $%
X_{1},X_{2},\dots ,X_{k},$ $k\leq n.$ We consider three possible cases:

\begin{itemize}
\item[1.] $\text{dim }X_{i}=0$: In this case, if $X_{i}=V(P_{i}),$ then $%
\mathbb{R}[x,y,z]/P_{i}$ is a field and $X_{i}$ is just the origin $\{O\}$.
Hence, $X_{i}$ does not contribute to any trajectory in $\mathbb{R}^{3}$
that approaches $O$, and can be discarded.

\item[2.] $\text{dim }X_{i}=1$: In this case $X_{i}$ is an irreducible
algebraic curve.

\item[3.] $\text{dim }X_{i}=2$: In this case $X_{i}$ is an hypersurface,
i.e., $X_{i}=V(P_{i}),$ where $P_{i}$ is a principal ideal.
\end{itemize}

We only have to study Cases $2$ and $3$.

We deal first with the case of an irreducible space curve in $\mathbb{C}^{3}$%
. Let us see that the problem of determining the limit of $q(x,y,z)$ along $%
X $, as well as its computation can be reduced to the case of a real plane
curve, a question already addressed in \cite{CSV}.\newline
By Theorem \ref{t2},~there is a plane curve $Y$ which is birationally
equivalent to $X$, and therefore a local isomorphism $\mu :X_{0}\rightarrow
Y_{0}$, where $X_{0}$ and $Y_{0}$ are as in Theorem \ref{t2}. There we
observed that the existence of the limit of $q(x,y,z)$ as $%
(x,y,z)\rightarrow (0,0,0)$ along $X_{0}$ is equivalent to the existence of
the limit of $q\circ \mu ^{-1}$ as $(u,v)\rightarrow (0,0)$ along $Y_{0}$.
Thus, 
\begin{equation}
\lim_{%
\begin{array}{c}
(x,y,z)\rightarrow O \\ 
(x,y,z)\in X_{0}%
\end{array}%
}q(x,y,z)~~=~~\lim_{%
\begin{array}{c}
(u,v)\rightarrow O \\ 
(u,v)\in Y_{0}%
\end{array}%
}q\circ \mu ^{-1}(u,v).  \label{cambio}
\end{equation}%
Summarizing:

\begin{proposition}
\label{pl3} Let $X\subset \mathbb{C}^{3}$ be an irreducible component of $%
X(q)$ of dimension $1$ containing $O$. Let $\mu :X_{0}\rightarrow Y_{0}$ be
the local isomorphism defined in Theorem \ref{t2}. Then, the limit of $%
q(x,y,z)$ as $(x,y,z)\rightarrow O$ along $X$ exists if and only if exists
along the irreducible plane curve $Y$ as $(u,v)\rightarrow (0,0).$ The
corresponding limits are related by (\ref{cambio}).
\end{proposition}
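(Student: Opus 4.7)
The plan is to transfer the limit along $X$ to a limit along the plane curve $Y$ by means of the birational map $\mu$ of Theorem \ref{t2}, taking advantage of the fact that the discrepancy between $X$ and $X_0$, and between $Y$ and $Y_0$, consists of only finitely many points and therefore does not affect limits taken in punctured neighborhoods of the origin.

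\textbf{Reduction to $X_0$ and $Y_0$.} First I would invoke Remark \ref{rnew2}: since $X\setminus X_0$ is finite, one can shrink $\delta>0$ so that every $(x,y,z)\in X$ with $0<|(x,y,z)|<\delta$ actually lies in $X_0$. Hence the limit of $q$ along $X$ exists and equals $L$ if and only if the same holds along $X_0$. The analogous reduction for $Y$ versus $Y_0$ uses that $Z=Y\setminus Y_0$ is finite (as shown in the proof of Theorem \ref{t2}). After this step it suffices to establish the equality~(\ref{cambio}).

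\textbf{Topological correspondence via $\mu$.} Next I would observe that the algebraic isomorphism $\mu:X_0\to Y_0$ restricts to a homeomorphism (in the Euclidean topology) between the real points of $X_0$ and those of $Y_0$. Since $\mu(a,b,c)=(a,b+\lambda c)$ with $\lambda\in\mathbb{R}$ is a polynomial map with real coefficients, it sends real points to real points, is continuous, and satisfies $\mu(O)=(0,0)$. Its inverse $\tau(d,e)=(d,f_1(d,e)/g_1(d),f_2(d,e)/g_2(d))$ is a real rational map whose denominators $g_1,g_2$ do not vanish on $Y_0$, so $\tau$ is continuous on the real points of $Y_0$ and sends them to real points of $X_0$. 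Therefore $\mu$ matches punctured Euclidean neighborhoods of $O$ in $X_0$ bijectively and bicontinuously with those of $(0,0)$ in $Y_0$.

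\textbf{Passing the $\epsilon$–$\delta$ criterion through $\mu$.} For each $p\in X_0$ one has the tautological identity $q(p)=(q\circ\mu^{-1})(\mu(p))$. Combined with the previous step, this converts any $\epsilon$–$\delta$ statement for the limit of $q$ along $X_0$ at $O$ into the corresponding statement for $q\circ\mu^{-1}$ along $Y_0$ at $(0,0)$, and vice versa using $\tau$ in place of $\mu$. This yields both directions of the biconditional and the equality~(\ref{cambio}). The only real subtlety worth flagging is the case in which the origin itself happens to be one of the finitely many removed points (for instance, $O\in X\setminus X_0$, so that $\mu$ is not literally defined at $O$); since the limit is taken only over punctured neighborhoods, however, one simply shrinks the neighborhood so that it lies entirely inside $X_0$ (respectively inside $Y_0$), and no genuine obstacle arises. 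Overall, the proposition is essentially a direct consequence of the birational equivalence in Theorem \ref{t2} once the topological upgrade from algebraic isomorphism to Euclidean homeomorphism has been verified.
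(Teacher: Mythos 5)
Your argument is the same route the paper takes: Theorem \ref{t2} supplies the local isomorphism $\mu$ with real rational inverse $\tau$, Remark \ref{rnew2} (together with the finiteness of $Z$) lets one replace $X$ by $X_0$ and $Y$ by $Y_0$ when taking limits at the origin, and the identity $q=(q\circ\mu^{-1})\circ\mu$ transfers the $\epsilon$--$\delta$ statements in both directions. In the situation the paper tacitly works in, namely $(0,0)\in Y_0$ (equivalently $O\in X_0$), your continuity bookkeeping is correct and in fact more explicit than what the paper writes, since the paper's ``proof'' of the proposition is essentially the sketch preceding it.

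The one genuine weak point is your handling of the edge case you yourself flag, $O\in X\setminus X_0$, i.e.\ $(0,0)\in Z$. (Incidentally, it is $\tau$, not $\mu$, that fails to be defined there: $\mu(a,b,c)=(a,b+\lambda c)$ is polynomial and defined on all of $X$.) Shrinking punctured neighborhoods does reduce the limit along $X$ to one along $X_0$ and the limit along $Y$ to one along $Y_0$, but it does not repair the transfer step: the direction ``limit along $X$ exists $\Rightarrow$ limit along $Y$ exists'' needs that real points of $Y_0$ close to $(0,0)$ have $\tau$-images close to $O$, and continuity of $\tau$ \emph{on} $Y_0$ says nothing about its behavior at the removed point $(0,0)$, where $g_1$ or $g_2$ vanishes. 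Concretely, if a second real point $p\in X$ with first coordinate $0$ also satisfies $\mu(p)=(0,0)$ (so the fiber over the origin is not a singleton and $(0,0)$ necessarily lies in $Z$), then near $(0,0)$ the curve $Y_0$ carries a branch whose $\tau$-image clusters at $p$ rather than at $O$, and the limit of $q\circ\mu^{-1}$ at $(0,0)$ along $Y$ can fail to exist or differ even though the limit of $q$ at $O$ along $X$ exists. So the equivalence is not a formal consequence of ``only finitely many points are removed'' plus ``$\tau$ is continuous on $Y_0$''; one must either verify, or arrange (for instance by a different admissible choice of $\lambda$ or of the representation $y=f_1/g_1$, $z=f_2/g_2$), that $(0,0)\notin Z$, so that $\tau$ is defined and continuous at $(0,0)$ with $\tau(0,0)=O$, or else treat that degenerate case separately. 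The paper glosses over this as well, but since you raised the case explicitly, the conclusion ``no genuine obstacle arises'' is the step of your proof that does not hold as written.
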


In the sequel we will denote by $X_{i}$ and $Y_{i}$ the two open subsets $%
X_{0}\subset X$ and $Y_{0}\subset Y$ defined in Theorem \ref{t2}. For the
purpose of analyzing the limit along the space curve $X$ it is only
necessary to consider those cases where the real trace of the birationally
isomorphic curve $Y$ turns out to be a\emph{\ plane curve containing the
origin.} By $\mu _{X_{i}}$ we denote the corresponding isomorphism between $%
X_{i}$ and $Y_{i}$ already constructed.

Now we analyze Case $3$. This is a lot more subtle, and requires a careful
analysis of the topology of the corresponding two dimensional component. A
key ingredient is a celebrated theorem of Whitney \cite{Whitney} about the
number of connected components of an affine algebraic variety. In the
following discussion we will show how one can reduce the analysis of the
2-dimensional irreducible components to Case 2.

Suppose that we have a rational function $q(x,y,z)=f(x,y,z)/g(x,y,z)$
defined on an irreducible hypersurface $X=V(h),$ where $h$ is a real
polynomial function of three variables and $q$ has an isolated zero at $0$.
Let $\mathcal{S}=\text{Sing}(X)$ be the singular locus of $X$. By Remark \ref%
{rsp}, $\mathcal{S}$ must be a variety of dimension strictly less than two.
Hence, if $\mathcal{S}$ contains the origin, the limit of $q$ as $%
(x,y,z)\rightarrow O$ along $\mathcal{S}$ can be computed as in Case 2.%
\newline
Now, we restrict our analysis to the nonsingular locus of $X$, that we
denote by $\mathcal{N}=X\setminus \mathcal{S}$. Without loss of generality
we may assume that $\mathcal{N}$ contains the origin, otherwise all of their
components would be irrelevant.\newline
Assume $O\in \mathcal{N}$, and define a family of real ellipsoids $%
E_{r}=\{(x,y,z)\in \mathbb{R}^{3}~:~Ax^{2}+By^{2}+Cz^{2}-r^{2}=0\}$, $%
A,B,C>0 $, $r\neq 0$. By $p_{r}(x,y,z)$ we will denote the quadratic
polynomial $Ax^{2}+By^{2}+Cz^{2}-r^{2}$.

\begin{definition}
Let $X=V(h) \subset \mathbb{C}^{3}$ and $E_{r}=\{(x,y,z) \in \mathbb{R}%
^{3}~:~Ax^{2}+By^{2}+Cz^{2}-r^{2}=0\}$, $r\neq 0$ as above. The critical set 
$C_{r}(q)$ will be the set of all real points in $E_{r} \cap X$ where $%
q(x,y,z)$ attains its maxima and minima. The union $\cup _{r>0}C_{r}(q)$ of
all critical sets will be denote by $\mathrm{Crit}_{X}(q)$.
\end{definition}

Since each $E_{r}\cap X$ is a compact set, and by hypothesis $O$ is an
isolated zero of $q$, the set $\mathrm{Crit}_{X}(q)$ is a well defined
subset of $X$.

We need the following analogue of Proposition \ref{pl1}.

\begin{proposition}
The limit $\lim_{(x,y,z)\rightarrow O}q(x,y,z)$ along $X$ exists and equals $%
L$ if and only if for every $\epsilon >0$ there is $\delta >0$ such that for
every $0<r<\delta $ the inequality $|q(x,y,z)-L|<\epsilon $ holds for all $%
(x,y,z)\in C_{r}$.
\end{proposition}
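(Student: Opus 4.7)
The strategy is to mimic the proof of Proposition \ref{pl1} almost verbatim, replacing the family of concentric spheres by the ellipsoids $E_{r}$ and the ambient domain $\mathbb{R}^{3}$ by the real trace $X\cap\mathbb{R}^{3}$. The sandwich idea is identical: every real point of $X$ near $O$ lies on exactly one ellipsoid $E_{r}$ with $r$ small, and on the compact slice $E_{r}\cap X\cap\mathbb{R}^{3}$ the value of $q$ is trapped between its min and max, which by definition belong to $C_{r}$.

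For the forward direction, I would use the two-sided bound
\begin{equation*}
\sqrt{\min(A,B,C)}\,|(x,y,z)|\;\leq\; r\;\leq\; \sqrt{\max(A,B,C)}\,|(x,y,z)|
\end{equation*}
valid for $(x,y,z)\in E_{r}$. In particular $C_{r}\subset X\cap\mathbb{R}^{3}$ and $r\to 0$ forces the Euclidean distance to $O$ to tend to $0$, so if $q\to L$ along $X$ the required $\epsilon$--$\delta$ statement over $C_{r}$ follows after rescaling the original $\delta$ by the factor $\sqrt{\min(A,B,C)}$.

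For the reverse direction, fix $\epsilon>0$ and let $\delta$ be as in the hypothesis. Put $\delta'=\delta/\sqrt{\max(A,B,C)}$. Given any real $(x,y,z)\in X$ with $0<|(x,y,z)|<\delta'$, set $r=\sqrt{Ax^{2}+By^{2}+Cz^{2}}$; then $0<r<\delta$ and $(x,y,z)\in E_{r}\cap X$. Because $O$ is an isolated zero of $g$, the compact set $E_{r}\cap X\cap\mathbb{R}^{3}$ avoids $\{g=0\}$ for all sufficiently small $r$, so $q$ is continuous there and attains its extrema at points $s(r),t(r)\in C_{r}$. Then
\begin{equation*}
q(s(r))-L\;\leq\; q(x,y,z)-L\;\leq\; q(t(r))-L,
\end{equation*}
and since $|q(s(r))-L|,\,|q(t(r))-L|<\epsilon$ by hypothesis, $|q(x,y,z)-L|<\epsilon$ follows.

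There is no substantive obstacle beyond what already appears in Proposition \ref{pl1}. The only real bookkeeping is the verification that $\{g=0\}$ is disjoint from $E_{r}\cap X\cap\mathbb{R}^{3}$ for small $r$ (so that $q$ is genuinely continuous on the compact set where the extrema are sought), together with the bilateral comparison between $|(x,y,z)|$ and the ellipsoidal ``radius'' $r$ supplied by the constants $\sqrt{\min(A,B,C)}$ and $\sqrt{\max(A,B,C)}$.
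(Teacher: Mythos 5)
Your proof is correct and follows essentially the same route as the paper: the paper's proof simply says the argument is identical to that of Proposition \ref{pl1}, with the single observation that every point $p=(a,b,c)$ of $X$ lies on the ellipsoid $E_{r}$ with $r=\sqrt{Aa^{2}+Bb^{2}+Cc^{2}}$, which is exactly your sandwich argument between the extrema attained on $E_{r}\cap X$. Your added bookkeeping (the two-sided comparison between $r$ and $|(x,y,z)|$, and the continuity of $q$ on the compact slices for small $r$) is a harmless elaboration of what the paper leaves implicit.
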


\begin{proof}
The proof follows identical lines as in Proposition \ref{pl1}. One just have to notice that each point in the critical set must lie in some $E_{r}$, since $p=(a,b,c)$
is obviously contained in $E_{r}$, with $r=\sqrt{Aa^{2}+Bb^{2}+Cc^{2}}$.
\end{proof}

Our objective is to determine $\text{Crit}_{X}(q)$. We can decompose this
set as the union of $\text{Crit}_{\mathcal{N}}(q)=\text{Crit}_{X}(q)\cap 
\mathcal{N}$ and $\text{Crit}_{X}(q)\cap \mathcal{S}$. Since $\text{Crit}%
_{X}(q)\cap \mathcal{S}\subset \mathcal{S},$ and the limit along $\mathcal{S}
$ can be determined as in Case 2, we just have to focus on $\text{Crit}_{%
\mathcal{N}}(q)$.\newline
First, we want to determine the nonsingular part of $\text{Crit}_{\mathcal{N}%
}(q)$ by using the method of Lagrange Multipliers, as in \cite{CSV}. For
this we define $\mathfrak{X}=V(\mathfrak{J})\subset X$ to be the zero set of
the ideal $\mathfrak{J}$ generated by $h$ and the determinant: 
\begin{equation*}
d(x,y,z)=\left\vert 
\begin{array}{ccc}
\partial p_{r}/\partial x & \partial p_{r}/\partial y & \partial
p_{r}/\partial z \\ 
\partial h/\partial x & \partial h/\partial y & \partial h/\partial z \\ 
\partial q/\partial x & \partial q/\partial y & \partial q/\partial z%
\end{array}%
\right\vert .
\end{equation*}%
As the points of $X$ already satisfy $\nabla q(x,y,z)=\lambda (x,y,z)$
(where $\nabla q$ denotes the gradient of $q$), and since $\nabla
p_{r}(x,y,z)=(2Ax,2By,2Cz)$, the affine variety $\mathfrak{X}$ must be
defined by the ideal generated by $h$ and by the determinant: 
\begin{equation*}
D(x,y,z)=\left\vert 
\begin{array}{ccc}
Ax & By & Cz \\ 
x & y & z \\ 
\partial h/\partial x & \partial h/\partial y & \partial h/\partial z%
\end{array}%
\right\vert .
\end{equation*}%
That is, $\mathfrak{X}=V(D,h)$. This variety is precisely the set of regular
points of $X$ that are critical points of $q$.

\begin{proposition}
\label{nueva}(Notation as above) Let us assume $O\in \mathcal{N}$. Then it
is possible to choose (in a generic way) suitable positive constants $A,B$
and $C$ such that the height of the ideal $\mathfrak{J}=(D,h)$ in the
polynomial ring $\mathbb{C}[x,y,z]$ is greater than one, and consequently $%
\dim \mathfrak{X}<2$.
\end{proposition}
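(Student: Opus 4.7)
The key reduction: since $X = V(h)$ is irreducible of dimension $2$, the ideal $(h)$ is a height-$1$ prime in $\mathbb{C}[x,y,z]$, so $\mathfrak{J} = (D, h)$ has height $> 1$ (equivalently $\dim \mathfrak{X} < 2$) if and only if $D \notin (h)$. The plan is therefore to show that for an open dense set of positive triples $(A,B,C)$, the polynomial $D$ does not lie in $(h)$.

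Expanding the determinant gives
\begin{equation*}
D = (A-B)\, xy\, \partial_z h \, + \, (B-C)\, yz\, \partial_x h \, + \, (C-A)\, xz\, \partial_y h,
\end{equation*}
which is \emph{linear} in $(A,B,C)$. Hence the ``bad'' locus $\{(A,B,C)\in\mathbb{R}^3 : D \in (h)\}$ is a linear subspace of $\mathbb{R}^3$, and to establish the proposition it suffices to produce one positive triple for which $D \notin (h)$. I argue by contradiction, supposing the bad locus is all of $\mathbb{R}^3$. Reading off the coefficients of $A$, $B$, $C$ then forces the three polynomials
\begin{equation*}
x(y\partial_z h - z\partial_y h), \quad y(z\partial_x h - x\partial_z h), \quad z(x\partial_y h - y\partial_x h)
\end{equation*}
to lie in $(h)$.

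Now I exploit primality of $(h)$. Each of the above expressions factors as a coordinate times a rotational derivative of $h$, so primality produces a dichotomy: either $h$ is (up to a scalar) equal to one of $x$, $y$, $z$, or each of the vector fields $L_1 = y\partial_z - z\partial_y$, $L_2 = z\partial_x - x\partial_z$, $L_3 = x\partial_y - y\partial_x$ sends $h$ into $(h)$. The first subcase is dispatched by direct substitution: if $h = cx$ then $D = c(B-C)yz \notin (x)$ whenever $B \neq C$, contradicting the working assumption (and similarly for $h = cy$ or $h = cz$). In the main subcase, each $L_i$ preserves total degree, so the relation $L_i(h) = q_i h$ forces the factor $q_i$ to be a scalar. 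The commutator identity $[L_1, L_2] = -L_3$ (a direct verification) applied to $h$ gives $-q_3 h = L_1(q_2 h) - L_2(q_1 h) = q_1 q_2 h - q_1 q_2 h = 0$, so $q_3 = 0$; by symmetry $q_1 = q_2 = 0$.

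Hence $h$ is annihilated by the full Lie algebra of rotations, so $h$ is $SO(3)$-invariant and must be a polynomial in $x^2 + y^2 + z^2$. Irreducibility of $h$ forces $h = c(x^2+y^2+z^2) + d$ with $c \neq 0$; since $O \in V(h)$, $d = 0$, and then $\nabla h(O) = 0$, contradicting the standing hypothesis $O \in \mathcal{N}$. This final contradiction shows that the bad locus is a proper linear subspace of $\mathbb{R}^3$, so any positive triple $(A,B,C)$ outside it witnesses $D \notin (h)$ and yields $\dim \mathfrak{X} < 2$. The main subtlety I anticipate is the degree-plus-commutator step extracting $SO(3)$-invariance from ``$L_i h \in (h)$''; once that is in place, irreducibility of $h$ together with smoothness at the origin close the argument.
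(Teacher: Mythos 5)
Your proof is correct, and it takes a genuinely different route from the paper's. The paper argues geometrically: assuming that $\nabla h(p)$ is radial at every point of $\mathcal{N}$, it uses the implicit function theorem to show that $\mathcal{N}$ would locally be a sphere centered at $O$, invokes Whitney's finiteness theorem together with $O\in\mathcal{N}$ to get a contradiction, and then picks $A,B,C$ so that $D(p)\neq 0$ at a single witness point $p\in\mathcal{N}\cap V(h)$, which already forces $D\notin (h)$. You instead stay entirely inside the polynomial ring: the observation that $D=A\,xL_1h+B\,yL_2h+C\,zL_3h$ is linear in $(A,B,C)$ turns the problem into showing the ``bad'' set is a proper linear subspace, and the combination of primality of $(h)$, degree preservation by the $L_i$, and the $\mathfrak{so}(3)$ commutation relation $[L_1,L_2]=-L_3$ pins down the only possible failures: $h$ a coordinate (which fails only on the hyperplane $B=C$, etc.) or $h$ rotation-invariant, hence $c(x^2+y^2+z^2)$, which is excluded by smoothness at $O$. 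What your approach buys is the elimination of all analytic and topological input (no implicit function theorem, no Whitney), an explicit description of the generic set of admissible $(A,B,C)$ as the complement of a proper linear subspace, and a complete classification of the obstructions; what the paper's approach buys is that it works pointwise on the real trace and fits more directly into the Lagrange-multiplier narrative. One small ordering slip at the end: irreducibility alone does not force $h=c(x^2+y^2+z^2)+d$ (e.g.\ $(x^2+y^2+z^2)^2+1$ is a rotation-invariant irreducible real polynomial); you should first use $h(O)=0$ to conclude that $t\mid P(t)$ where $h=P(x^2+y^2+z^2)$, so that $x^2+y^2+z^2$ divides $h$, and only then invoke irreducibility to get $h=c(x^2+y^2+z^2)$. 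Since you do use both facts, this is a cosmetic reordering, not a gap.
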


\begin{proof}
It suffices to show that for a suitable choice of positive constants $A,B,C$
there is at least one point $p\neq O$ in $\mathcal{N}$ such that $D(p)\neq 0$%
.

First, let us see that there is at least one point $p\in \mathcal{N}$
different from the origin such that the gradient of $h$ does not point in
the direction of $p$, i.e., such that $\nabla h(p)\neq \lambda p,$ for all $%
\lambda \in \mathbb{R}$. Indeed, suppose on the contrary that for every $%
p\in \mathcal{N}$ there existed $\lambda (p)\neq 0$ such that $\nabla
h(p)=\lambda (p)p$. Since each $p$ is a regular point of $X$, one must have $%
\nabla h(p)\neq 0.$ Hence, after making an appropriated change of
coordinates that fixes $O$ (a rotation, and then a homothety) we may assume
without loss of generality that $\partial h/\partial z(0,0,1)\neq 0$, and
that $p=(0,0,1)$. By the implicit function theorem there would exist $%
U_{0}\subset \mathbb{R}^{2},$ a neighborhood of $(0,0),$ and a smooth function $%
u(x,y)$ in $U_{0}$ such that $u(0,0)=1,$ and $h(x,y,u(x,y))=0,$ for all $%
(x,y)\in U_{0}$. Since $\nabla h(p)=\lambda (p)p$, one must have $\partial
h/\partial x(0,0,1)=\partial h/\partial y(0,0,1)=0,$ and consequently $%
\partial u/\partial x(0,0)=0=\partial u/\partial y(0,0)$.

Let $W_{p}$ be the graph $W_{p}=\{(x,y,u(x,y))~:~(x,y)\in U_{0}\}$. For any $%
\mathfrak{t}\in W_{p}$, the normal vector at $\mathfrak{t}$ is given by 
\begin{equation*}
n(\mathfrak{t})=\frac{(-u_{x},-u_{y},1)}{\sqrt{u_{x}^{2}+u_{y}^{2}+1}}.
\end{equation*}%
Henceforth, if $\mu (\mathfrak{t})=\lambda (\mathfrak{t})/\Vert \nabla h(%
\mathfrak{t})\Vert $ one has that $\nabla h(\mathfrak{t})=\mu (\mathfrak{t)}%
\Vert \nabla h(\mathfrak{t})\Vert \mathfrak{t}$, and consequently $n(%
\mathfrak{t})$ can be written as 
\begin{equation*}
n(\mathfrak{t})=\frac{(x,y,u(x,y))}{\sqrt{x^{2}+y^{2}+u^{2}(x,y)}}.
\end{equation*}%
From this, we deduce:~ 
\begin{eqnarray*}
\frac{1}{\sqrt{u_{x}^{2}+u_{y}^{2}+1}} &=&\frac{u(x,y)}{\sqrt{%
x^{2}+y^{2}+u^{2}(x,y)}},~ \\
\frac{-u_{x}}{\sqrt{u_{x}^{2}+u_{y}^{2}+1}} &=&\frac{x}{\sqrt{%
x^{2}+y^{2}+u^{2}(x,y)}},
\end{eqnarray*}%
and 
\begin{equation*}
\frac{-u_{y}}{\sqrt{u_{x}^{2}+u_{y}^{2}+1}}=\frac{y}{\sqrt{%
x^{2}+y^{2}+u^{2}(x,y)}}.
\end{equation*}%
This implies~ $u_{x}=-x/u(x,y),$ and $u_{y}=-y/u(x,y)$. Hence, $u(x,y)=\sqrt{%
1-x^{2}-y^{2}}$, since $u(0,0)=1$. \emph{We conclude that }$W_{p}$\emph{\
would be a neighborhood of }$p$ \emph{in} $\mathcal{N}$\emph{\ which is part
of a sphere centered at the origin. } But\emph{\ }on the other hand, a
theorem of Whitney asserts that $\mathcal{N}$ can only have finitely many
connected components (see \cite{Whitney}). Then this would  imply that $%
\mathcal{N}$ could not contain the origin, a contradiction with our assumption.

Therefore, we may assume there exists a point $p\neq O$ in $\mathcal{N}$
such that $\nabla h(p)\neq \lambda p,$ for all $\lambda \neq 0$. After
applying a rotation (if necessary) we may also assume that $a,b,c$ are all
nonzero.

After those preliminaries it becomes clear how to choose positive constants $A,B$
and $C$ such that the determinant 
\begin{equation*}
\left\vert 
\begin{array}{ccc}
Aa & Bb & Cc \\ 
a & b & c \\ 
\partial h/\partial x(a,b,c) & \partial h/\partial y(a,b,c) & \partial
h/\partial z(a,b,c)%
\end{array}%
\right\vert 
\end{equation*}%
does not vanish: The vectors $\nabla h(p)$ and $p=(a,b,c)$ generate a plane $%
H,$ since they are not parallel. Therefore, it suffices to choose any point $%
(\alpha ,\beta ,\gamma )$ outside $H$ and such that $A=\alpha /a$, $B=\beta
/b$, and $C=c/\gamma $ are positive.
\end{proof}

As before, for the limit $\lim_{(x,y,z)\rightarrow O}q(x,y,z)$ to exist
along $X$ it is necessary that it exists along any real curve that contains $%
O $. In particular, the limit along each component of $\mathfrak{X}$ must
exist, and all theses limits must be equal. By Proposition 17, $\text{dim}(%
\mathfrak{X})<2$, and henceforth we can reduce this last question to cases 1
and 2.\newline
Let $\mathfrak{Z}$ be the affine variety defined by the ideal generated by $%
h $ and by the minors $2\times 2$ of the matrix 
\begin{equation*}
\left[ 
\begin{array}{ccc}
Ax & By & Cz \\ 
\partial h/\partial x & \partial h/\partial y & \partial h/\partial z%
\end{array}%
\right] .
\end{equation*}%
The set $\mathfrak{Z}\cap E_{r}\cap \mathcal{N}$ defines the locus of those
real points where $E_{r}$ and $\mathcal{N}$ do not intersect transversely.
Outside this set, $E_{r}\cap \mathcal{N}$ is a $1$-dimensional manifold (see 
\cite{GP}, Page 30) that we shall denote by $\Sigma $. Clearly, the
vanishing of these two by two minors forces the vanishing of the determinant 
$D(x,y,z)$. Henceforth, $\mathfrak{Z}\subset \mathfrak{X}$, and consequently 
$\text{dim}(\mathfrak{Z})<2$, by Proposition \ref{nueva}.\newline
Again, for the existence of the limit $\lim_{(x,y,z)\rightarrow O}q(x,y,z)$
it is required, in particular, its existence along any relevant component of 
$\mathfrak{Z}$, and consequently the problem reduces again to cases 1 and 2.
This takes care of the subset of $\text{Crit}_{\mathcal{N}}(q)$ inside $%
\mathfrak{Z}$.\newline
As for those points in $\text{Crit}_{\mathcal{N}}(q)$ that lie outside $%
\mathfrak{Z}$, we notice that they are contained in the $1$-dimensional
manifold $\Sigma $. Then they must be part of $\mathfrak{X}$, since this
variety is precisely those regular points where $q$ attains an extreme
value. Thus, the points in $\text{Crit}_{\mathcal{N}}(q)$ that lie outside $%
\mathfrak{Z}$ must be contained in $\mathfrak{X}$. Once again, we have
reduced the problem to cases 1 and 2.\newline

The following proposition summarizes this discussion:

\begin{proposition}
\label{pl4} Let $X$ be a relevant irreducible component of dimension $2$ of
the discriminant variety $X(q)$. Consider $\mathcal{S}$, $\mathfrak{X}$, and 
$\mathfrak{Z}$ as defined above. Then, the limit of $q(x,y,z)$ as $%
(x,y,z)\rightarrow O$ along $X$ exists, and equals $L$, if and only if, the
limit of $q(x,y,z)$ as $(x,y,z)\rightarrow O$ exists and equals $L$ along
each one of the components of the curves $\mathcal{S}$, $\mathfrak{X}$, and $%
\mathfrak{Z}$.
\end{proposition}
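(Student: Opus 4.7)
The plan is to invoke the critical-set characterization proved just above for the hypersurface $X$ (the analogue of Proposition \ref{pl1} in which the spheres $C_r(0)$ are replaced by the ellipsoids $E_r$), together with an explicit decomposition of $\mathrm{Crit}_X(q)$ into pieces contained in $\mathcal{S}$, $\mathfrak{X}$, and $\mathfrak{Z}$. The forward implication is trivial, since $\mathcal{S}$, $\mathfrak{X}$, and $\mathfrak{Z}$ are all real-algebraic subsets of $X$, and if $q \to L$ along $X$ then the same is true along any subset. All the content is in the converse.

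For the converse, I plan to establish the inclusion
\[
\mathrm{Crit}_X(q) \;\subset\; \mathcal{S}\cup \mathfrak{X}\cup \mathfrak{Z}.
\]
The tautological split $\mathrm{Crit}_X(q)=(\mathrm{Crit}_X(q)\cap \mathcal{S})\cup \mathrm{Crit}_{\mathcal{N}}(q)$ reduces this to showing $\mathrm{Crit}_{\mathcal{N}}(q)\subset \mathfrak{X}\cup \mathfrak{Z}$. Fix $p\in \mathrm{Crit}_{\mathcal{N}}(q)$; by definition $p\in E_r\cap \mathcal{N}$ for some $r>0$ and $q$ attains an extremum on the real set $E_r\cap X$ at $p$. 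If $p\in \mathfrak{Z}$ there is nothing to prove. Otherwise the two by two minors of the matrix with rows $\nabla p_r(p)$ and $\nabla h(p)$ do not all vanish, so $\nabla p_r(p)$ and $\nabla h(p)$ are linearly independent, and hence $E_r\cap \mathcal{N}$ is a one-dimensional smooth submanifold near $p$. The method of Lagrange multipliers then places $\nabla q(p)$ in the span of $\nabla p_r(p)$ and $\nabla h(p)$, i.e. the $3\times 3$ determinant whose rows are these three gradients vanishes at $p$. Using the discriminant-variety relation $\nabla q(p)=\lambda(x,y,z)|_p$ (which holds because $X\subset X(q)$), this determinant collapses, up to a nonzero scalar, precisely to $D(x,y,z)$, so $h(p)=D(p)=0$ and $p\in \mathfrak{X}$.

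Once the inclusion is in hand, the proof is routine. Given $\epsilon>0$, the hypothesis provides $\delta_{\mathcal{S}},\delta_{\mathfrak{X}},\delta_{\mathfrak{Z}}>0$ such that $|q(x,y,z)-L|<\epsilon$ for every $(x,y,z)$ at distance less than $\delta_\bullet$ from $O$ along the corresponding subvariety; for these we use Proposition \ref{nueva} (which guarantees that $\mathfrak{X}$ and $\mathfrak{Z}$ have dimension strictly less than $2$) together with Remark \ref{rsp} for $\mathcal{S}$, so that each of the three sets decomposes into components of dimension at most $1$ and the limit assumption applies component by component via Case 2 (and Case 1 for any isolated points). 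Taking $\delta=\min\{\delta_{\mathcal{S}},\delta_{\mathfrak{X}},\delta_{\mathfrak{Z}}\}$ one has $|q-L|<\epsilon$ on $\mathrm{Crit}_X(q)\cap\{0<\|(x,y,z)\|<\delta\}$, and the critical-set characterization above closes the argument.

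The main obstacle I anticipate is the Lagrange step: one must verify honestly that, in the nonsingular transverse regime $p\notin \mathfrak{Z}$, the extremum condition on the compact set $E_r\cap X$ really does imply the vanishing of $D(x,y,z)$ rather than of the bigger $3\times 3$ determinant with rows $\nabla p_r, \nabla h, \nabla q$. The key input is that points of $X$ automatically satisfy $\nabla q=\lambda(x,y,z)$, so that the $\nabla q$-row is a multiple of $(x,y,z)$ and the determinant reduces to $D$ up to the factor $\lambda$; this was in fact the very reason $\mathfrak{X}$ was defined by $(D,h)$ rather than by the larger Jacobian. A second, more technical point is to be sure that the choice of positive constants $A,B,C$ furnished by Proposition \ref{nueva} can be made simultaneously with the extremum analysis; since Proposition \ref{nueva} only constrains $A,B,C$ on a Zariski-open subset of the cone of positive triples, one may fix such $(A,B,C)$ once and for all before running the rest of the argument.
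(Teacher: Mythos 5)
Your argument is correct and follows essentially the same route as the paper: the forward direction is immediate, and the converse rests on the ellipsoid analogue of Proposition \ref{pl1} together with the inclusion $\mathrm{Crit}_X(q)\subset\mathcal{S}\cup\mathfrak{X}\cup\mathfrak{Z}$, obtained by splitting off the singular locus and then, on $\mathcal{N}$, separating the non-transverse points (which land in $\mathfrak{Z}$) from the transverse ones, where Lagrange multipliers plus the relation $\nabla q=\lambda\,(x,y,z)$ on $X(q)$ collapse the $3\times 3$ determinant to $D$. This is precisely the discussion the paper gives immediately before stating the proposition, down to the same reliance on Proposition \ref{nueva} and Remark \ref{rsp} to reduce $\mathfrak{X}$, $\mathfrak{Z}$, and $\mathcal{S}$ to the lower-dimensional cases.
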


We are now ready to state our main result.

\begin{theorem}
\label{tl2} Let $q(x,y,z)=f(x,y,z)/g(x,y,z),$ where $f$ and $g$ are rational
polynomial functions, and where $g$ has an isolated zero at the origin. Let $%
X(q)$ be the discriminant variety associated to $q$. Denote by $%
\{X_{1},\dots ,X_{k}\}$ the relevant irreducible components of dimension one
of $X(q)$, and by $\{X_{k+1},\dots ,X_{n}\}$ the relevant irreducible
components of dimension two of $X(q)$. Then, the limit of $q$ as $%
(x,y,z)\rightarrow O$ exists, and equals $L,$ if and only if the limit of $%
q(x,y,z)$ as $(x,y,z)\rightarrow O$ along $X_{i}$ exists, and equals $L$,
for all $i=1,2,\dots ,n$. Moreover:

\begin{enumerate}
\item For the components $X_{i}$, $i=1,2,\dots ,k,$ the limit of $q(x,y,z)$
as $(x,y,z)\rightarrow (0,0,0)$ along $X_{i}$ is determined as in
Proposition \ref{pl3}.

\item For the components $X_{j},$ $j=k+1,\dots ,n$, the limit of $q(x,y,z)$
as $(x,y,z)\rightarrow (0,0,0)$ along $X_{j}$ is determined as in
Proposition \ref{pl4}.
\end{enumerate}
\end{theorem}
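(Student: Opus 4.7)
The plan is to chain together the three main reductions already developed in the paper: Proposition \ref{pl1}, which passes from the full three-dimensional limit to a limit along the discriminant variety $X(q)$; Proposition \ref{pl3}, which handles the one-dimensional relevant components; and Proposition \ref{pl4}, which handles the two-dimensional ones. The real content of the theorem is that, once one combines these reductions, every remaining geometric piece of $X(q)$ that could conceivably carry a trajectory toward the origin has been accounted for.

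First I would invoke Proposition \ref{pl1} to replace the statement ``$\lim_{(x,y,z)\to O}q(x,y,z)=L$'' by the equivalent condition that $q(x,y,z)\to L$ uniformly on $X(q)\cap D^{\ast}_{\delta}$ as $\delta\to 0$. Next, decompose $X(q)=X_{1}\cup\cdots\cup X_{n}\cup X_{n+1}\cup\cdots\cup X_{N}$ into its irreducible components over $\mathbb{C}$, separating the \emph{relevant} ones (those containing $O$) from the non-relevant ones. For any non-relevant component $X_{j}$, its real trace is a closed subset of $\mathbb{R}^{3}$ not containing $O$, so its distance to the origin is some $\rho_{j}>0$; choosing $\delta<\min_{j}\rho_{j}$ removes these components from the $\epsilon$--$\delta$ condition entirely, so they play no role. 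This reduces matters to showing that the limit exists and equals $L$ along each relevant component.

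Among the relevant components, the zero-dimensional ones are reduced to the single point $\{O\}$ (as noted in Case 1 of Section 3) and contribute no nontrivial trajectories, so the $\epsilon$--$\delta$ condition is vacuous on them. For each one-dimensional relevant $X_{i}$, $1\le i\le k$, Proposition \ref{pl3} gives the equivalence between the limit of $q$ along $X_{i}$ and the limit of $q\circ\mu_{X_{i}}^{-1}$ along the birational plane curve $Y_{i}$ as $(u,v)\to(0,0)$, where Remark \ref{rnew2} ensures that passing from $X_{i}$ to $X_{i,0}$ costs only finitely many points, all of which can be excluded by shrinking $\delta$. For each two-dimensional relevant $X_{j}$, $k+1\le j\le n$, Proposition \ref{pl4} asserts that the limit along $X_{j}$ exists and equals $L$ if and only if the same holds along each component of the curves $\mathcal{S}$, $\mathfrak{X}$, and $\mathfrak{Z}$; by construction these are all of dimension less than two (using Proposition \ref{nueva} for $\mathfrak{X}$ and the inclusion $\mathfrak{Z}\subset\mathfrak{X}$), so they fall back into Case 1 or Case 2 and are handled by the previous step.

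Finally, putting these equivalences together gives both directions of the theorem. The forward direction is the straightforward observation that, if $q\to L$ globally, then it does so along every subset; the reverse direction is the delicate one, and is precisely what Propositions \ref{pl1}, \ref{pl3}, and \ref{pl4} were designed to supply. The main obstacle I anticipate is the bookkeeping required to verify that shrinking the neighborhood can simultaneously (i) discard all non-relevant components, (ii) avoid the finitely many exceptional points removed in Theorem \ref{t2} and Remark \ref{rnew2} from each of the finitely many one-dimensional components, and (iii) make all the resulting $\epsilon$-bounds hold uniformly; but since the number of components and of exceptional points is finite, this is an exercise in choosing the minimum of finitely many positive quantities rather than a genuine difficulty.
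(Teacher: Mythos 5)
Your proposal is correct and follows essentially the same route as the paper: the paper gives no separate proof of Theorem \ref{tl2}, which is stated as a summary of the Section 3 discussion, and your argument reassembles exactly that chain (Proposition \ref{pl1} to pass to $X(q)$, discarding non-relevant and zero-dimensional components, Proposition \ref{pl3} for the curves, Proposition \ref{pl4} for the surfaces, and a finite minimum of $\delta$'s to glue the components together).
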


\section{A high level description of an algorithm for computing the limit 
\label{HLA}}

Let $q(x,y,z)=f(x,y,z)/g(x,y,z),$ where $f$ and $g$ are polynomial functions
of three variables with rational coefficients, and $g$ has an isolated zero
at the origin. Consider $X(q),$ the discriminant variety associated to $q$.
We have to decompose $X(q)$ into irreducible components, and then choose
only those irreducible components $\{X_{1},\dots ,X_{n}\}$ that are relevant.%
\newline

The algorithm has to deal with two different cases:

\begin{itemize}
\item \textbf{D1}: The component $X_{i}$ has dimension $1$. Then as observed
before, $X_{i}$ is birationally equivalent to an irreducible plane curve $%
Y_{i}$. Let us denote by $\mathbb{C}(x,y,z)$ the fraction field of the ring
of coordinates $\mathbb{C}[X,Y,Z]/I(X_{i})$ of $X_{i}$. As we already
noticed we may always assume that $x,y,z$ are transcendental elements over $%
\mathbb{C}$: If, for instance, $x$ were algebraic over $\mathbb{C}$, then
there would exist a polynomial $P(X)\in \mathbb{C}[X]$ such that $P(x)=0.$
This is equivalent to saying that $P(X)\in I(X_{i})$. Suppose we write $%
P(X)=(X-\alpha _{1})(X-\alpha _{2})\cdots (X-\alpha _{n})$ in $\mathbb{C}[X]$%
. Since $I(X_{i})$ is a prime ideal, some linear factor $X-\alpha _{j}$ must
belong to $I(X_{i}).$ But as $X_{i}$ contains the origin, we must have $%
\alpha _{j}=0$. Hence, we could write $I(X_{i})=(X,h_{1}(Y,Z),\dots
,h_{m}(Y,Z)),$ where $h_{k}(Y,Z)\in \mathbb{C}[Y,Z],$ for $k=1,2,\dots ,m$.
If we denote by $I$ the ideal $(h_{1}(Y,Z),\dots ,h_{m}(Y,Z)),$ and by $%
X_{i}^{\prime }=V(I)\subset \mathbb{C}^{2}$ the affine variety defined by $I$%
, then the limit of $q(X,Y,Z)$ as $(X,Y,Z)\rightarrow O$ along $X_{i}$ is
the same as the limit of $q(0,Y,Z)$ as $(Y,Z)\rightarrow (0,0)$ along $%
X_{i}^{\prime }$. But the existence of this limit, as well as its value, can
be computed using the algorithmic method developed in \cite{CSV}. By
Proposition \ref{t1} and Corollary \ref{c1} we know that if $x$ is
transcendental over $\mathbb{C}$ there exists $\lambda \in \mathbb{C}(x)$
such that $\mathbb{C}(x,y,z)=\mathbb{C}(x,u),$ where $u=y+\lambda z$. Also,
by Theorem \ref{t2}, if we consider $\varphi :\mathbb{C}[X,U]\rightarrow 
\mathbb{C}[X,Y,Z]/I(X)$ defined by $\varphi (X)=x$, $\varphi (U)=y+\lambda z$%
, then $\ker {\varphi }$ defines the irreducible plane curve $Y$ that is
birationally equivalent to $X$. As we observed in Section \ref{Groebner}, $%
\ker {\varphi }=(I(W)T+(U-(Y+\lambda Z)))\cap \mathbb{C}[X,U],$ where $T=%
\mathbb{C}[X,U,Y,Z]$ is computable. On the other hand, the ring homomorphism 
$\mathbb{C}[X,U]/\ker {\varphi }\rightarrow \mathbb{C}[X,Y,Z]/I(X)$ induces
an isomorphism of fields $\varphi :K(X)\rightarrow K(Y)$. As we showed in
the proof of Theorem \ref{t2}, since $y,z\in \mathbb{C}(x,u)$ then one must
have $y=f_{1}(x,u)/g_{1}(x),$ and $z=f_{2}(x,u)/g_{2}(x),$ for some $%
f_{1},f_{2},g_{1},$ and $g_{2}$ with real coefficients. In that same proof
we noticed that the local isomorphism $\mu :X_{0}\rightarrow Y_{0}$ is
determined by those polynomials.

By Proposition \ref{pl3}, computing the limit of $q(X,Y,Z)$ as $%
(X,Y,Z)\rightarrow O$ along $X_{i}$ is equivalent to computing the limit of $%
q\circ \mu _{X_{i}}^{-1}(X,U),$ as $(X,U)\rightarrow (0,0)$ along $Y_{i}$,
and this last limit can be dealt with using the algorithm developed in \cite%
{CSV}.

\item \textbf{D2:} Suppose that $\text{dim}X_{i}=2$. Then $X_{i}$ is an
affine variety defined by a principal ideal $I(X_{i})=(h)$. For random
positive values $A$, $B$ and $C$ the algorithm computes the height of the
ideal $J=(D,h),$ where 
\begin{equation*}
D=\left\vert 
\begin{array}{ccc}
Ax & By & Cz \\ 
x & y & z \\ 
\partial h/\partial x & \partial h/\partial y & \partial h/\partial z%
\end{array}%
\right\vert .
\end{equation*}%
As we saw in the reduction to plane curves, there always exist positive
constants $A,B$ and $C$ such that $\text{ht}(\mathfrak{J})\geq 2$. Since $%
\text{dim}(\mathfrak{X})\leq 1$, then $\mathfrak{X}=V(\mathfrak{J})$.
Henceforth, one can compute the limit of $q(x,y,z)$ as $(x,y,z)\rightarrow O$
along $\mathfrak{X}$ using the prescription in \textbf{D1}.\newline
Since $\mathcal{S}=\text{Sing(}X)$, the affine variety defined by the ideal $%
(h,\frac{\partial h}{\partial x},\frac{\partial h}{\partial y},\frac{%
\partial h}{\partial z})$ must be a proper subset of $X$. Then $\mathcal{S}$
is also an algebraic curve, and once again we can compute the limit of $%
q(x,y,z)$ as $(x,y,z)\rightarrow O$ along $\mathcal{S}$ using \textbf{D1}.%
\newline
Now, the affine variety $\mathfrak{Z}$ defined by the ideal generated by the
minors $2\times 2$ of the matrix 
\begin{equation*}
\left[ 
\begin{array}{ccc}
Ax & By & Cz \\ 
\frac{\partial h}{\partial x} & \frac{\partial h}{\partial y} & \frac{%
\partial h}{\partial z}%
\end{array}%
\right]
\end{equation*}%
and the polynomial $h$, has also dimension less than $2$. Hence, the limit
of $q(x,y,z)$ as $(x,y,z)\rightarrow O$ along $\mathfrak{Z}$ is also
computed using \textbf{D1}.

\item Finally, if the limit of $q(x,y,z)$ as $(x,y,z)\rightarrow O$ along
each relevant irreducible component of $X(q)$ of dimension one exists, and
equals $L$, one says that the limit of $q(x,y,z)$ as $(x,y,z)\rightarrow O$
is $L$. Otherwise, one says that this limit does not exists.
\end{itemize}

\section{Examples}

\subsection{Example $1$}

Suppose that we want to compute the limit: 
\begin{equation*}
\lim_{(X,Y,Z)\rightarrow (0,0,0)}\frac{YX-ZY+ZX}{X^{2}+Y^{2}+Z^{2}}.
\end{equation*}%
Let $q(X,Y,Z)=YX-ZY+XZ/X^{2}+Y^{2}+Z^{2}$. We illustrate the necessary
computations, carried out in the program \textit{Maple}.

\begin{enumerate}
\item Using the command \texttt{PrimeDecomposition(X(q))} one gets the
irreducible components of $X(q)$: 
\begin{eqnarray*}
&&V((Y-X+Z)),~~V((X^{2}+Y^{2}+Z^{2})),~~V((X+Y,Z-2X)),~~ \\
&&V((X+Y,Z+X))~~\text{and}~~V((X+Y,Z^{2}+2X^{2})).
\end{eqnarray*}%
By using the command \texttt{HilbertDimension(Q)} one can see that the
irreducible component $V(X+Y,Z+X)$ has dimension $1$.

Let us see that for $\lambda =1$, $\mathbb{C}(x,u)=\mathbb{C}(x,y,z),$ where 
$u=y+z$. Here, $\mathbb{C}(x,y,z)$ denotes the fraction field of the ring of
coordinates of the variety $V(X+Y,Z+X)$. Consider the ideal $%
I=(X+Y,Z+X)T+(U-(Y+Z)),$ where $T=\mathbb{C}[X,Y,Z,U]$. The command \texttt{%
EliminationIdeal(I,\{U,X\})} generates the ideal $J=(2X+U)$. On the other
hand, the command \texttt{Basis(I,plex(Z,Y,U))} gives us a Groebner basis
for $I$ respect to the lexicographic monomial order, with $Z>Y>U$. In this
particular case we obtain the following basis: $\{2X+U,Y+X,Z+X\}$. From this
basis we deduce that $y=-x$ and $z=-x$ are elements of $\mathbb{C}(x,u)$.
Therefore, $\mathbb{C}(x,u)=\mathbb{C}(x,y,z),$ and consequently the ideal $%
J=(2X+U)$ defines an irreducible plane curve which is birationally
equivalent to $V(X+Y,Z+X)$. Also, $y=-x$ and $z=-x$ determine the
isomorphism $\rho :V(2X+U)\rightarrow V(X+Y,Z+X)$. Therefore, the limit of $%
q(X,Y,Z),$ as $(X,Y,Z)\rightarrow (0,0,0)$ along $V(X+Y,Z+X),$ is equivalent
to the limit of $q\circ \rho (X,U)$ as $(X,U)\rightarrow (0,0)$ along $%
V(2X+U)$. This latter limit can be computed using the algorithm developed in 
\cite{CSV}. However, in this case it is easy to see directly that the value
of the limit is $-1/3,$ since $q\circ \rho (X,U)=q(X,-X,-X)=-1/3$.
Therefore, the limit of $q(X,Y,Z)$ as $(X,Y,Z)\rightarrow (0,0,0)$ along $%
V(X+Y,Z+X)$ is $-1/3$.\newline
Let $h(X,Y,Z)=Y-X+Z.$ One may choose $A=1$, $B=2,$ and $C=1$. Using the
command \texttt{HilbertDimension(P)}\textbf{,} with $P=(f,h)$, where 
\begin{equation*}
f=\left\vert 
\begin{array}{ccc}
X & 2Y & Z \\ 
X & Y & Z \\ 
\frac{\partial h}{\partial X} & \frac{\partial h}{\partial Y} & \frac{%
\partial h}{\partial Z}%
\end{array}%
\right\vert ,
\end{equation*}%
one obtains that the variety defined by the ideal $P=(f,h)$ has dimension $1$%
. In this case $V(P)=V(-XY-YZ,Y-X+Z)$. Using again the command \texttt{%
PrimeDecomposition(P)} one obtains the irreducible components of the variety 
$V(P)$: $V(P)=V(Y,-X+Z)\cup V(2X-Y,Y+2Z),$ where each of these components
has dimension $1$. Therefore, one just needs to compute a limit along
irreducible algebraic curves (again, using the main algorithm of \cite{CSV}%
). For the variety $V(Y+X,Z+X)$ we may follow an analogous procedure. It is
not difficult to see that the limit of $q(X,Y,Z),$ as $(X,Y,Z)\rightarrow
(0,0,0)$ along the variety $V(Y,-X+Z)$ is equal to $1/2$.\newline
Hence, we conclude that 
\begin{equation*}
\lim_{(X,Y,Z)\rightarrow (0,0,0)}\frac{YX-ZY+ZX}{X^{2}+Y^{2}+Z^{2}}
\end{equation*}%
does not exist.
\end{enumerate}

\subsection{Example $2$}

\begin{enumerate}
\item We want to compute the limit: 
\begin{equation*}
\lim_{(X,Y,Z)\rightarrow (0,0,0)}\frac{X^{2}YZ}{X^{2}+Y^{2}+Z^{2}}
\end{equation*}%
Let $q(X,Y,Z)=X^{2}YZ/X^{2}+Y^{2}+Z^{2}$.

Using the command \texttt{PrimeDecomposition(X(q)}\textit{\ }one obtains the
irreducible components of $X(q)$:

\item The irreducible components of dimension $1$ are: $X_{1}=V(X,Y)$,~ $%
X_{2}=V(X,Z)$,~ $X_{3}=V(Y,Z)$,~ $X_{4}=V(X,Z-Y)$,~ $X_{5}=V(X,Z+Y)$,~ $%
X_{6}=V(X,Y^{2}+Z^{2})$,~ $X_{7}=V(X,3Z^{2}-Y^{2})$,~ $%
X_{8}=V(X,3Z^{2}+Y^{2})$, ~$X_{9}=V(Z,X^{2}+Y^{2})$,~ $%
X_{10}=V(Z+Y,-2Z^{2}+X^{2})$,~ $X_{11}=V(-Z+Y,-2Z^{2}+X^{2})$,~ and~ $%
X_{12}=V(-2Z^{2}+X^{2},3Z^{2}+Y^{2})$.

\item The irreducible components of dimension $2$ are: $X_{13}=V(X),$~ and~ $%
X_{14}=V(X^{2}+Y^{2}+Z^{2})$.

\item There is an irreducible component of dimension $0$: $X_{15}=V(X,Y,Z)$.

We know that each irreducible component of dimension $1$ is birationally
equivalent to an irreducible plane curve. Now, if any of the variables $X,Y$
or $Z$ appears in the ideal that defines the corresponding irreducible
component, then one can easily see that such component in $\mathbb{C}^{3}$
is actually contained in $\mathbb{C}^{2}$. Henceforth, it is already a plane
curve, and one can use the main algorithm of \cite{CSV} to compute these
(two variable) limits. Hence, one sees that the limits along the varieties $%
X_{1}=V(X,Y)$,~ $X_{2}=V(X,Z)$,~ $X_{3}=V(Y,Z)$,~ $X_{4}=V(X,Z-Y)$,~ $%
X_{5}=V(X,Z+Y)$,~ $X_{6}=V(X,Y^{2}+Z^{2})$,~ $X_{7}=V(X,3Z^{2}-Y^{2})$,~ $%
X_{8}=V(X,3Z^{2}+Y^{2})$, and ~$X_{9}=V(Z,X^{2}+Y^{2})$ are equal to zero.

Now we discuss the limit along the other irreducible components of dimension 
$1$.

\item For $X_{10}=V(Z+Y,-2Z^{2}+X^{2})$: We noticed in the proof of the
Primitive Element Theorem that for almost all $\lambda \in \mathbb{R}$, $%
\mathbb{R}(x,y,z)=\mathbb{R}(x,u)$, where $u=y+\lambda z$. In this case, one
could take $\lambda =2$. Let $I=(Z+Y,-2Z^{2}+X^{2},U-(Y+2Z))\subset \mathbb{R%
}[X,Y,Z,U]$, with the command \texttt{EliminationIdeal}$(I,{X,U})$ one gets
the plane curve $V(2U^{2}-X^{2}),$ which is birationally equivalent to $%
X_{10}$. On the other hand, by using the command \texttt{Basis}$%
(I,plex(Z,Y,U))$ one computes the basis $\{2U^{2}-X^{2},Y+U,-U+Z\}$. From
this basis we deduce that $y=-u$ and $z=u$ as elements of $\mathbb{R}(x,u)$.
Thus, the limit along the component $X_{10}$ is the same as the limit of $%
q(X,-U,U)$ along the irreducible plane curve $V(2U^{2}-X^{2}).$ This latter
limit can be calculated using the the main algorithm of \cite{CSV}. In this
case we obtain the value zero.

\item For $X_{11}=V(-Z+Y,-2Z^{2}+X^{2})$, with $\lambda =1$ and following
the same procedure, i.e., defining the ideal $I=(-Z+Y,-2Z^{2}+X^{2},U-(Y+Z))$
and then computing \texttt{EliminationIdeal}$(I,{X,U})$ and \texttt{Basis}$%
(I,plex(Z,Y,U))$, one obtains the irreducible plane curve $V(U^{2}-2X^{2}),$
which is birationally equivalent to $X_{11},$ as well as the basis $%
\{U^{2}-2X^{2},-U+2Y,-U+2Z\}$. From this basis one deduces that $y=u/2$ and $%
z=u/2$, and therefore the limit along the component $X_{11}$ is the same as
the limit of $q(X,U/2,U/2)$ along the plane curve $V(U^{2}-2X^{2})$ which is
again a limit in two variables, and can be computed using the methods of 
\cite{CSV}. In this case the limit is also zero.

\item For $X_{12}=V(-2Z^{2}+X^{2},3Z^{2}+Y^{2})$, with $\lambda =1$, by
defining the ideal 
\begin{equation*}
I=(-2Z^{2}+X^{2},3Z^{2}+Y^{2},U-(Y+Z)),
\end{equation*}
and then computing \texttt{EliminationIdeal}$(I,{X,U})$ and \texttt{Basis}$%
(I,plex(Z,Y,U))$, one obtains the irreducible plane curve $%
V(4X^{4}+2X^{2}U^{2}+U^{4}),$ which is birationally equivalent to $X_{12},$
as well as the basis 
\begin{equation*}
\{4X^{4}+2X^{2}U^{2}+U^{4},-U^{3}-4UX^{2}+4X^{2}Y,4ZX^{2}+U^{3}\}.
\end{equation*}
From this we deduce $y=\frac{u^{3}+4ux^{2}}{4x^{2}},$ and $z=\frac{-u^{3}}{%
4x^{2}}$, and therefore the limit along the component $X_{12}$ is the same
as the limit of $q(X,\frac{U^{3}+4UX^{2}}{4X^{2}},\frac{-U^{3}}{4X^{2}})$
along the plane curve $V(4X^{4}+2X^{2}U^{2}+U^{4}),$ which is again a limit
in two variables. This limit is also zero.

\item Now, the components of dimension $2$ are $V(X^{2}+Y^{2}+Z^{2})$ and $%
V(X)$. The first component is precisely the set of points where the rational
function $q$ is not defined, and consequently can be discarded. Since the
variable $X$ appears in the ideal defining the second variety the limit
clearly must be zero.\newline
We conclude that 
\begin{equation*}
\lim_{(X,Y,Z)\rightarrow (0,0,0)}\frac{X^{2}YZ}{X^{2}+Y^{2}+Z^{2}}=0,
\end{equation*}%
since it is zero along each of the irreducible components of the
discriminant variety.
\end{enumerate}


\begin{thebibliography}{10}
\bibitem[1]{AMR} Alonso M.E. , Mora T., Raimondo M. (1992). \textit{A
Computational Model for Algebraic Power Series}, J. Pure Appl. Alg 77, 1-38.

\bibitem[2]{ANMR} Alonso M.E., Niesi G., Mora T., Raimondo M. (1992). 
\textit{Local Parametrization of Space Curves at Singular Points,} Computer
Graphics and Mathematics (B. Falcidieno, I. Herman, C. Pienovi, eds.),
Eurographic Seminar Series, Springer Verlag, 61-90.

\bibitem[3]{BPR} Basu S., Pollack R., Roy M.F. (2003). \textit{Algorithms in
Real Algebraic Geometry}, Springer Verlag.

\bibitem[4]{CSV} Cadavid C., Molina S., V\'{e}lez, J. D. (2012). Limits of
quotients of bivariate real analytic functions. Journal of Symbolic
Computation, 50, 197-207.

\bibitem[5]{Cohen} Cohen H. (1993). \textit{A Course in Computational
Algebraic Number Theory}, Springer-Verlag..

\bibitem[6]{CPR} Cucker F., Pardo L., Recio T., M.F.Roy, and Raimondo M.
(1989). \textit{On the computation of the local and global branches of real
algebraic curves}, Proc. AAECC 6, number 356, in Lect. Notes in Comp. Sci.,
pages 161-181, Springer-Verlag.

\bibitem[7]{Eisenbud} Eisenbud D. (1994). \textit{Commutative Algebra with a
view towards Algebraic Geometry}, Springer-Verlag.

\bibitem[8]{Fulton} Fulton W. (2008). \textit{Algebraic Curves, An
introduction to algebraic geometry}, Addison-Wesley.

\bibitem[9]{Greuel} Greuel M., Lossen C., Shustin E. (2007). \textit{%
Introduction to Singularities and Deformations}, Springer-Verlag.

\bibitem[10]{G1} Gruntz D. (1993). \textit{A New Algorithm for Computing
Asymptotic Series}, ISSAC, 239-244.

\bibitem[11]{G2} Gruntz D. (1999). \textit{Computing Limits in Computing
Algebra Systems}, in Computer Algebra Systems: a Practical guide, ed. by M.
J. Wester, John Wiley \& Sons, Chichester, U.K., pp. 153-170.

\bibitem[12]{G3} Gruntz D. (1996). \textit{On computing Limits in a Symbolic
Manipulation System}, ETH Diss 11432.

\bibitem[13]{GP} Guillemin, V., and Pollack, A. (2010). Differential
topology (Vol. 370). American Mathematical Soc.

\bibitem[14]{Hartshorne} Hartshorne, R. (1977). \textit{Algebraic geometry}
(Vol. 52). Springer Science \& Business Media.

\bibitem[15]{Lang} Lang S. (1984). \textit{Algebra, }Addison-Wesley
Publishing Company Inc.

\bibitem[16]{MOND} Mond D., Saia M., (ed.) (2003). \textit{Real and Complex
Singularities}, Lecture Notes in Pure and Applied Mathematics 232, Marcel
Dekker, .

\bibitem[17]{link} (MAPLE)
http://www.maplesoft.com/support/help/maple/view.aspx?path=limit\%2fmulti

\bibitem[18]{Matsumura} Matsumura, H. (1989). \textit{Commutative ring theory%
} (Vol. 8). Cambridge university press.

\bibitem[19]{SSH} Salvy B., and Shackell J. (1999). \textit{Symbolic
Asymptotics: Multiseries of inverse functions}, J. of Symbolic computation,
27 (6):543-563.

\bibitem[20]{Whitney} Whitney H. (1957). \textit{Elementary Structure of
Real Algebraic Varieties}, Annals of Mathematics, Second Series, Vol. 66,
No. 3, pp. 545-556

\bibitem[21]{Xiao} Xiao, S., \& Zeng, G. (2014). Determination of the limits
for multivariate rational functions. Science China Mathematics, 57(2),
397-416.
\end{thebibliography}
\end{document}